\newtheorem{Theorem}{Theorem}[section]
\newtheorem{Corollary}[Theorem]{Corollary}
\newtheorem{Lemma}[Theorem]{Lemma}
\newtheorem{Proposition}[Theorem]{Proposition}
 { \theoremstyle{definition}
\newtheorem{Definition}[Theorem]{Definition}

\newtheorem{Remark}[Theorem]{Remark} }
\newcommand{\Aa}{\mathcal{A}}
\newcommand{\C}{\mathbb{C}}
\newcommand{\Complex}[1][]{(d_{#1}, \Hh_k)}
\newcommand{\Core}{C}
\newcommand{\Dd}{\mathscr{D}}
\newcommand{\Hh}{\mathscr{H}}
\newcommand{\inj}{\hookrightarrow}
\newcommand{\Jj}{\mathcal{J}}
\newcommand{\LieG}{\mathfrak{g}}
\newcommand{\N}{\mathbb{N}}
\newcommand{\omegat}{\underline{\omega}}
\newcommand{\R}{\mathbb{R}}
\newcommand{\Rr}{\mathcal{R}}
\newcommand{\RepV}{\mathbb{V}}
\newcommand{\RightMult}{\mathcal{R}}
\newcommand{\SymmIdeal}{\mathscr{L}}
\newcommand{\vt}{\underline{v}}
\DeclareMathOperator{\dom}{Dom}
\DeclareMathOperator{\GNS}{GNS}
\DeclareMathOperator{\id}{Id}
\DeclareMathOperator{\ind}{Index}
\DeclareMathOperator{\Ran}{Ran}
\DeclareMathOperator{\Tr}{Tr}
\numberwithin{equation}{section}
\begin{document}

\allowdisplaybreaks

\newcommand{\arXivNumber}{1506.07913}

\renewcommand{\PaperNumber}{016}

\FirstPageHeading

\ShortArticleName{Conformally Twisted Spectral Triples}

\ArticleName{On the Chern--Gauss--Bonnet Theorem\\ and Conformally Twisted
Spectral Triples\\ for $\boldsymbol{C^*}$-Dynamical Systems}

\Author{Farzad FATHIZADEH~$^\dag$ and Olivier GABRIEL~$^\ddag$}

\AuthorNameForHeading{F.~Fathizadeh and O.~Gabriel}

\Address{$^\dag$~Department of Mathematics, Mail Code 253-37, California Institute of Technology,\\
\hphantom{$^\dag$}~1200 E.~California Blvd., Pasadena, CA 91125, USA}
\EmailD{\href{mailto:farzadf@caltech.edu}{farzadf@caltech.edu}}

\Address{$^\ddag$~University of Copenhagen, Universitetsparken 5, 2100 K{\o}benhavn {\O}, Denmark}
\EmailD{\href{mailto:olivier.gabriel.geom@gmail.com}{olivier.gabriel.geom@gmail.com}}
\URLaddressD{\url{http://oliviergabriel.eu/}}

\ArticleDates{Received October 26, 2015, in f\/inal form February 04, 2016; Published online February 10, 2016}

\Abstract{The analog of the Chern--Gauss--Bonnet theorem is
studied for a $C^*$-dynamical system consisting of a~$C^*$-algebra~$A$ equipped with an ergodic action of a~compact Lie group~$G$. The structure of the Lie algebra~$\mathfrak{g}$ of~$G$ is used to interpret the Chevalley--Eilenberg
complex with coef\/f\/icients in the smooth subalgebra $\mathcal{A} \subset A$ as \emph{noncommutative}
dif\/ferential forms on the dynamical system. We conformally perturb
the standard metric, which is associated with the unique
$G$-invariant state on~$A$, by means of a Weyl conformal
factor given by a positive invertible element of the algebra, and consider
the Hermitian structure that it induces on the complex.
A~Hodge decomposition theorem is proved, which allows
us to relate the Euler characteristic of the complex to the
index properties of a~Hodge--de~Rham operator for the
perturbed metric. This operator, which is shown to be selfadjoint, is a~key ingredient
in our construction of a spectral triple on~$\mathcal{A}$ and a twisted
spectral triple on its opposite algebra. The conformal invariance
of the Euler characteristic is interpreted as an indication of the Chern--Gauss--Bonnet theorem
in this setting. The spectral triples encoding the conformally perturbed
metrics are shown to enjoy the same spectral summability
properties as the unperturbed case.}

\Keywords{$C^*$-dynamical systems; ergodic action;
invariant state; conformal factor; Hodge--de~Rham operator;
noncommutative de Rham complex; Euler characteristic;
Chern--Gauss--Bonnet theorem; ordinary and twisted spectral
triples; unbounded selfadjoint operators; spectral dimension}

\Classification{58B34; 47B25; 46L05}

\vspace{-2mm}

\section{Introduction}

In noncommutative geometry \cite{IHES, NCG}, $C^*$-dynamical systems
$(A, G, \alpha)$ have been long studied from a dif\/ferentiable point of
view starting with extending the basic notions of dif\/ferential
geo\-met\-ry and dif\/ferential topology to a dif\/ferential
structure on a $C^*$-algebra $A$ endowed
with an action $\alpha\colon G \to \operatorname{Aut}(A)$ of
a Lie group~$G$. That is, the notion of a~connection, a~vector bundle, and Chern classes were introduced for such a~dynamical system, a pseudodif\/ferential calculus was developed and the analog
of the Atiyah--Singer index theorem was proved in~\cite{CAlgGeoDiff}.
The noncommutative two torus $\mathbb{T}_\theta^2$ has been
one of the main motivating examples for these developments.
In~\cite{TrSpLieGpWahl}, this line of investigation has been
taken further focusing on general compact Lie groups and index theory.

Following the seminal work of Connes and Tretkof\/f on the
Gauss--Bonnet theorem for~$\mathbb{T}_\theta^2$~\cite{GaussBonnet-ConnesTretkoff} and its extension
in \cite{GaussBonnet-FK} concerning general translation invariant conformal
structures, local dif\/ferential geometry of non-f\/lat noncommutative tori
has been a subject of increasing interest
in recent years \cite{RicciBM, ModCurvCM, ScalarCurvNCtorusFK,
DixmierTraceNCtorusFK, ScalarCurv4NCTFK, MoritaLM}. A Weyl conformal factor
may be used to perturb a f\/lat metric on noncommutative tori, and Connes' pseudodif\/ferential calculus
\cite{CAlgGeoDiff} can be employed along with noncommutative computational
methods to carry out calculation of scalar curvature and to investigate
the related dif\/ferential geometric statements, see also \cite{AsymmetricDS}
for an asymmetric perturbation of the metric.

The idea and the techniques were indeed initiated in a preprint \cite{PreprintCC},
where with the help of complicated modif\/ied logarithmic
functions and a modular automorphism, an expression for the
value $\zeta(0)$ of the spectral zeta function of the Laplacian
of a curved metric on $\mathbb{T}^2_\theta$ was written. The vanishing of this expression is
interpreted as the Gauss--Bonnet theorem \cite{GaussBonnet-ConnesTretkoff},
which was suggested by the developments in the following intimately
related theories. In fact, the spectral action principle \cite{SpActChC},
in particular the related calculations in the presence of a dilaton
\cite{SpActScaleChC}, and the theory of twisted spectral triples,
which arise naturally in noncommutative conformal geo\-met\-ry~\cite{TrSpTypeIII, NCConformalPW}, indicate independence of~$\zeta(0)$
from the conformal factor.

Connes' index formula for Fredholm modules, which involves cyclic cohomology,
is quite broad~\cite{IHES}. It asserts that given a f\/initely summable Fredholm module over
an algebra, the analytic index, given by pairing a
$K$-homology and a $K$-theory element of the
algebra, coincides with the topological index, which pairs
the corresponding elements in periodic cyclic cohomology and homology obtained
by the Chern--Connes characters. The local index formula of Connes and Moscovici \cite{localtrace95}
gives a local formula based on residue trace functionals, which is in the same cyclic cohomology
class as the Chern--Connes character, and has the advantage that one can perform
explicit computations with it (see also \cite{localHig}). The residue trace functionals are intimately related to
the spectral formulation of Wodzicki's noncommutative residue \cite{LocalInvarW, NCResidue1W}. In fact,
the formulation of the noncommutative residue as an integration over
the cosphere bundle of a manifold also is important for explicit computations
with noncommutative geometric spaces, see \cite{Residue4NCTF, ScalarCurv4NCTFK, NCResidueFW}
for a related treatment on noncommutative tori.

\looseness=1
The notion of a twisted spectral triple introduced by Connes and Moscovici~\cite{TrSpTypeIII} allows to incorporate a variety
of new examples, in particular type~III examples in the sense of the Murray--von~Neumann classif\/ication of operator algebras. They have shown that the Chern--Connes
character of a f\/initely summable twisted spectral triple is an ordinary cyclic cocycle
and enjoys an index pairing with $K$-theory. Also, they have constructed
a local Hochschild cocycle, which indicates that the ground is prepared for
extending the local index formula to the twisted case. This was carried out
in~\cite{TwsitedMos} for a particular class of twisted spectral triples; the analog
of Connes' character formula was investigated in~\cite{TwistedFatKha} for the
examples.
For treatments using twisted cyclic theory, in particular for relations
of the theory with Cuntz algebra~\cite{CuntzAlg} and quantum groups,
we refer to \cite{KMSCarNeshNes, TwistedCarPhiRen, CuntzAlgCarPhiRen},
see also \cite{ModularKaad, TwistedSU2KaaSen}. More recent works related
to the twisted version of spectral triples reveal their connections with the Bost--Connes
system, Riemann surfaces and graphs \cite{TwistedGreMarTeh}, and with the
standard model of particle physics \cite{TwistedDevMar}. Twisted spectral triples
associated with crossed product algebras are studied in \cite{TrSpTypeIII, TwistedIochum,TwsitedMos}, see also \cite{FatKhaTwistedSymbols} for an algebraic treatment.

Ergodic actions of compact groups on operator algebras are well-studied in the von Neumann setting (see, e.g., \cite{ErgodActIWassermann,ErgodActIIWassermann,ErgodClassSU2Wassermann}) and in a $C^*$-algebraic context. They were f\/irst introduced for $C^*$-algebras by E.~St\o{}rmer \cite{SpectraErgodTransfStormer} and this initial ef\/fort was expanded in various articles. Let us just mention two of them:
\begin{itemize}\itemsep=0pt
\item
In their article \cite{ErgodActAHK}, Albeverio and H\o{}egh-Krohn investigate in particular ergodic actions on commutative $C^*$-algebras $A = C(X)$ and prove that they correspond to continuous transitive actions on $X$.
\item
The article \cite{ErgodCpctGpHKLS} by H\o{}egh-Krohn, Landstad and St\o{}rmer proves that if $G$ acts ergodically on a unital $C^*$-algebra $A$, its unique $G$-invariant state is actually a \emph{trace}.
\end{itemize}
The article \cite{Rieffel98} was the f\/irst to suggest in 1998 that ergodic actions give rise to interesting spectral triples. This article proceeds with studying the metric induced on state spaces by ergodic actions. More recently, the article \cite{TrSpLieGpGG} produced a detailed construction of a so called \emph{Lie--Dirac operator} on a $C^*$-algebra $A$, based on an ergodic action of a compact Lie group~$G$ on~$A$. It also investigated the analytic properties of these Lie--Dirac operators, proving in particular that they are f\/initely summable spectral triples. In the present article, we elaborate on the techniques used in \cite{TrSpLieGpGG} in order to prove quite dif\/ferent results.

Indeed, in \cite{TrSpLieGpGG} the focus was on a Dirac operator for a ``noncommutative spin manifold'', whereas here
the emphasis is on a sort of Hodge--de~Rham operator associated with a conformally perturbed metric, construction of twisted spectral triples and the analog of the Chern--Gauss--Bonnet theorem. The Hodge--de~Rham operators constructed here are (in general) \emph{not} Lie--Dirac operators in the sense of \cite{TrSpLieGpGG}. In this previous article, the algebra structure of $A$ played only a~minor role in the analytical properties of the spectral triple. Here, the multiplication of $A$ has a central importance.

For a recent approach of Hodge theory using Hilbert modules, we refer to the recent ar\-tic\-le~\cite{EllipticComplexKrysl}.
See also~\cite{HodgeTheoryEllipticKrysl, HodgeTheoryKrysl}.

This article is organized as follows. In Section~\ref{Sec:Reminders},
we recall the necessary statements from representation theory and
operator theory, and the notion of ordinary and twisted spectral
triples along with their main properties that are used in our arguments
and concern our constructions. We associate a~complex of noncommutative dif\/ferential forms to a~$C^*$-dynamical system
$(A, G, \alpha)$ in Section~\ref{Sec:HodgeOperator}. In the ergodic case,
the analog of the Hodge--de~Rham operator is studied when the complex is equipped with
a Hermitian structure determined by a metric in the conformal class
of the standard metric associated with the unique $G$-invariant
trace on~$A$.

Inspired by a construction in~\cite{GaussBonnet-ConnesTretkoff},
we construct in Section~\ref{Sec:ConfTwistedTrSp} a spectral triple on~$A$
and a~twisted spectral triple
on the opposite algebra~$A^\text{op}$, which encode the geometric
information of the conformally perturbed metric. We study the
Dirac operator of the perturbed metric carefully and prove that
it is selfadjoint and enjoys having the same spectral dimension
as the non-perturbed case. It should be stressed that ergodicity
plays a crucial role for the latter to hold.

The existence of an analog of the Chern--Gauss--Bonnet theorem
is studied in Section \ref{Sec:CGBTheorem} by proving a
Hodge decomposition theorem for our complex and showing
that its Euler characteristic is independent of the conformal
factor. Combining this with the McKean--Singer index formula
and small time asymptotic expansions, which often exist for
noncommutative geometric spaces, we explain how the
analog of the Euler class or the Pfaf\/f\/ian of the curvature
form can be computed as local geometric invariants of
examples that f\/it into our setting. Indeed, such invariants
depend on the behavior at inf\/inity of the eigenvalues of the
involved Laplacians and the action of the algebra. Finally,
our main results and conclusions are summarized in
Section~\ref{Sec:Conclusions}.

\section{Preliminaries}
\label{Sec:Reminders}

We start by some reminders about results and notations from various anterior articles.
\begin{Definition}
Given a strongly continuous action $\alpha$ of a compact group $G$ on a unital $C^*$-algebra $A$, we say that it is \emph{ergodic} if the f\/ixed algebra of $G$-invariants elements is reduced to the scalars, i.e., if $\forall \, g \in G$, $\alpha_g(a) = a$, then $a \in \C 1_A$.
\end{Definition}

Among the important results obtained with this notion of ergodic action, let us quote the following \cite[Theorem~4.1, p.~82]{ErgodCpctGpHKLS}:
\begin{Theorem}
\label{Thm:ErgodAct}
Let $A$ be a unital $C^*$-algebra, $G$ a compact group and~$\alpha$ a strongly continuous representation of $G$ as an ergodic group of $*$-automorphisms of~$A$, then the unique $G$-invariant state~$\varphi_0$ on~$A$ is a~trace.
\end{Theorem}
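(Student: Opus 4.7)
The plan has two separate components: establishing uniqueness of the $G$-invariant state, and then extracting the trace property from the interplay between invariance and the multiplication of $A$.

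\textbf{Uniqueness via averaging.} Since $G$ is compact with normalized Haar measure $dg$ and the action is strongly continuous, the Bochner integral
\[
E(a) := \int_G \alpha_g(a)\, dg
\]
makes sense in norm for every $a \in A$. The map $E$ is unital, completely positive, and its range lies in the fixed-point algebra $A^G$. By ergodicity, $A^G = \C 1_A$, so there is a unique bounded linear functional $\varphi_0$ with $E(a) = \varphi_0(a)\, 1_A$. Any $G$-invariant state $\psi$ satisfies $\psi = \psi \circ E = \varphi_0$, which gives both existence (since the averaged functional is itself a state) and uniqueness.

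\textbf{Reduction to isotypic components.} By density and norm-continuity of $\varphi_0$, it suffices to prove $\varphi_0(ab) = \varphi_0(ba)$ on the dense $*$-subalgebra $\Aa \subset A$ of $G$-finite (smooth) vectors. The Peter--Weyl decomposition gives $\Aa = \bigoplus_{\pi \in \widehat{G}} A_\pi$ into isotypical components. Because $\varphi_0 \circ \alpha_g = \varphi_0$, the functional $\varphi_0$ vanishes on $A_\pi$ for every non-trivial $\pi$. Since $A_\pi \cdot A_{\pi'} \subset \bigoplus_{\rho \subset \pi \otimes \pi'} A_\rho$, and the trivial representation appears in $\pi \otimes \pi'$ exactly when $\pi' \cong \overline{\pi}$, we get $\varphi_0(a_\pi b_{\pi'}) = 0 = \varphi_0(b_{\pi'} a_\pi)$ unless $\pi' \cong \overline{\pi}$. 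Thus the identity reduces to showing $\varphi_0(xy) = \varphi_0(yx)$ for $x \in A_\pi$, $y \in A_{\overline{\pi}}$, one $\pi$ at a time.

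\textbf{The core computation.} Pick bases $\{e_{i,k}\}$ of $A_\pi$ and $\{f_{j,l}\}$ of $A_{\overline{\pi}}$ adapted to the decomposition into copies of $V_\pi$ and $V_{\overline{\pi}}$, so that $\alpha_g(e_{i,k}) = \sum_m \pi_{mi}(g) e_{m,k}$ and similarly for $f_{j,l}$. Applying $\varphi_0 = \varphi_0 \circ E$ to $e_{i,k} f_{j,l}$ and $f_{j,l} e_{i,k}$, the Schur orthogonality relation
\[
\int_G \pi_{mi}(g)\, \overline{\pi_{nj}(g)}\, dg = \frac{\delta_{mn}\delta_{ij}}{d_\pi}
\]
collapses each expression to $d_\pi^{-1}\, \delta_{ij}$ times a multiplicity-space pairing. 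The trace identity then reduces to showing that the two resulting bilinear forms between $A_\pi$ and $A_{\overline{\pi}}$ coincide under the transposition of arguments.

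\textbf{Main obstacle.} The delicate step is precisely this last identification of the two pairings. Following H\o{}egh-Krohn--Landstad--St\o{}rmer, one proves the structural fact that ergodicity forces the multiplicities to satisfy $L_\pi \leq d_\pi$ and provides an equivariant embedding $A_\pi \hookrightarrow \mathrm{End}(V_\pi)$ under which $\varphi_0(x^*x)$ becomes the normalized Hilbert--Schmidt inner product $d_\pi^{-1}\Tr(\cdot)$. Once this structural statement is available, both $\varphi_0(xy)$ and $\varphi_0(yx)$ become normalized traces of a product of endomorphisms of $V_\pi$, and the trace property on $\Aa$ follows from $\Tr(XY) = \Tr(YX)$. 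Everything else in the argument is formal manipulation of $G$-invariance and the Peter--Weyl decomposition; the only non-trivial input is this multiplicity bound and the resulting matricial description of $\varphi_0$ on each $A_\pi$.
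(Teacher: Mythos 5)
The paper offers no proof of this statement to compare against: Theorem~\ref{Thm:ErgodAct} is quoted verbatim from H\o{}egh-Krohn--Landstad--St\o{}rmer \cite[Theorem~4.1, p.~82]{ErgodCpctGpHKLS}. Judged on its own terms, your proposal is an outline rather than a proof. The averaging argument for existence and uniqueness of $\varphi_0$ and the Peter--Weyl reduction to pairs of spectral subspaces $A_\pi$, $A_{\overline{\pi}}$ are correct, but they are the routine part; the tracial property is concentrated entirely in the paragraph you label ``Main obstacle'', and there you do not prove anything---you appeal to the very result of \cite{ErgodCpctGpHKLS} that is to be established.

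Moreover, the structural fact you invoke does not do the job in the form stated. Ergodicity together with the multiplicity bound $m(\pi)\leqslant \dim V_\pi$ (Proposition~\ref{Prop:multiplicity}) only yields a \emph{linear}, $G$-equivariant (and, after adjustment, isometric) embedding $A_\pi\hookrightarrow \operatorname{End}(V_\pi)$; this says nothing about how $\varphi_0$ evaluates \emph{products} $xy$ with $x\in A_\pi$, $y\in A_{\overline{\pi}}$, because the embedding does not intertwine the multiplication of $A$ with composition of endomorphisms, and $\varphi_0(xy)$ is the component of $xy$ along the trivial isotype of $A$, not the trace of a composition. An identification under which ``$\varphi_0(xy)$ and $\varphi_0(yx)$ become normalized traces of products of endomorphisms'' would instantly give $\varphi_0(xy)=\varphi_0(yx)$, so it essentially presupposes the theorem; proving such a multiplicative matricial model is not a formal consequence of Schur orthogonality and is not available at this level of generality (full-multiplicity structure theory in the spirit of Wassermann is a separate, later development). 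The known proof proceeds differently: roughly, one passes to the GNS representation of $\varphi_0$, observes that its modular automorphism group commutes with the $G$-action and therefore preserves the finite-dimensional spectral subspaces, and then estimates derived from the multiplicity bound force the modular operator to be trivial, i.e., $\varphi_0$ to be a trace. Without an argument of this kind, your proposal has a genuine gap precisely at the step that carries all the content.
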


Another result that will play an important role in our article is 
\cite[Proposition~2.1, p.~76]{ErgodCpctGpHKLS}, which we adapt slightly in the following:
\begin{Proposition}
\label{Prop:multiplicity}
Let $A$ be a unital $C^*$-algebra, $G$ a compact group and $\alpha$ a strongly continuous representation of~$G$ as an ergodic group of $*$-automorphisms of~$A$. Let~$V$ be an irreducible unitary representation of~$G$, $A(V)$ the spectral subspace of~$V$ in~$A$ and~$m(V)$ the multiplicity of~$V$ in~$A(V)$. Then we have
\begin{gather*}
m(V) \leqslant \dim V.
\end{gather*}
\end{Proposition}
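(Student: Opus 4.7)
The plan is to assemble a canonical basis of $A(V)$ into a rectangular matrix $M$ with entries in $A$, and then exploit the matrix identity $MM^* = \mathrm{Id}$, which forces $M^*M$ to be a projection. Comparing the traces of $M^*M$ and $\mathrm{Id}$ via the amplified state $\Tr\otimes\varphi_0$ on $M_n(A)$ yields the desired dimensional bound.

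Set $n = \dim V$ and $m = m(V)$, and write the spectral subspace as $A(V) \cong V \otimes \mathcal{M}_V$ with $\dim \mathcal{M}_V = m$. Fix bases $(e_k)_{k=1}^n$ of $V$ and $(u_i)_{i=1}^m$ of $\mathcal{M}_V$, and let $a_{i,k} \in A(V)$ denote the corresponding elements, so that $\alpha_g(a_{i,k}) = \sum_l V(g)_{lk}\, a_{i,l}$. By Theorem~\ref{Thm:ErgodAct}, the $G$-invariant state $\varphi_0$ is a trace. Applying Schur orthogonality to the $G$-invariance of $\varphi_0$ yields $\varphi_0(a_{i,k}^* a_{j,l}) = \frac{1}{n}\delta_{kl}\, G_{ij}$ for a positive Hermitian Gram matrix $G = (G_{ij})$, and we may rescale the $u_i$ so that $G_{ij} = \delta_{ij}$.

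A direct averaging computation (using Schur orthogonality together with unitarity of $V(g)$) shows that both $\sum_k a_{i,k}^* a_{j,k}$ and $\sum_k a_{i,k} a_{j,k}^*$ are $G$-invariant, hence scalars by ergodicity. Applying $\varphi_0$ (with the trace property for the second identity) determines both scalars: $\sum_k a_{i,k}^* a_{j,k} = \delta_{ij}\,1_A$ and $\sum_k a_{i,k} a_{j,k}^* = \delta_{ij}\,1_A$. Assembling the $a_{i,k}$ into a matrix $M \in M_{m,n}(A)$, the second identity reads $MM^* = \mathrm{Id}_{M_m(A)}$; hence $M^*M \in M_n(A)$ is a self-adjoint idempotent, i.e., a projection. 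Therefore
\[
m \;=\; \sum_{i,k} \varphi_0(a_{i,k}^* a_{i,k}) \;=\; (\Tr\otimes\varphi_0)(M^*M) \;\leqslant\; (\Tr\otimes\varphi_0)(\mathrm{Id}_{M_n(A)}) \;=\; n.
\]

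The only real subtlety is the simultaneous normalization making both left- and right-sums equal to $\delta_{ij}\,1_A$. This is precisely where the trace property of $\varphi_0$ enters: it forces the two a~priori independent $m\times m$ positive scalar coefficient matrices produced by ergodicity to be complex conjugates of each other, so that diagonalizing one in an orthonormal basis automatically diagonalizes the other. Without the trace property this balancing would fail, and the final projection-and-trace step would not close.
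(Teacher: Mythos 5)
The paper itself does not prove this proposition: it is imported (with slightly adapted notation) from H{\o}egh-Krohn--Landstad--St{\o}rmer \cite[Proposition~2.1]{ErgodCpctGpHKLS}, so there is no internal proof to compare against. Judged on its own, your argument is a correct derivation of the bound \emph{from the trace property} of Theorem~\ref{Thm:ErgodAct}: the invariance of $\sum_k a_{i,k}^*a_{j,k}$ and $\sum_k a_{i,k}a_{j,k}^*$ under $\alpha_g$ follows from unitarity of $V(g)$ alone, ergodicity makes them scalar matrices $C$ and $D$, the trace property gives $D_{ij}=C_{ji}$ so that one normalization handles both, and the projection-plus-positive-functional step $m=(\Tr\otimes\varphi_0)(M^*M)\leqslant(\Tr\otimes\varphi_0)(\mathrm{Id}_{M_n(A)})=n$ is sound. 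Two small repairs are needed. First, ``rescaling the $u_i$'' must be a genuine change of basis of the multiplicity space, and this requires the Gram matrix to be positive \emph{definite}; justify it either by faithfulness of $\varphi_0$ (standard for ergodic actions) or directly: if $\sum_{i,j}\bar\lambda_i\lambda_j G_{ij}=0$, then by ergodicity $\sum_k b_k^*b_k=0$ with $b_k=\sum_i\lambda_i a_{i,k}$, forcing every $b_k=0$ and contradicting linear independence. Second, finiteness of $m(V)$ is part of the assertion, so you should run the argument for an arbitrary finite family of linearly independent copies of $V$ inside $A(V)$ and conclude that any such family has at most $\dim V$ members, rather than fixing a basis of $\mathcal{M}_V$ at the outset.

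One caveat on logical order. Within this paper your route is legitimate, since Theorem~\ref{Thm:ErgodAct} is quoted before Proposition~\ref{Prop:multiplicity}. But in the source \cite{ErgodCpctGpHKLS} the multiplicity bound (their Proposition~2.1, p.~76) is established earlier than, and without appeal to, the trace theorem (their Theorem~4.1, p.~82), and the finite-dimensionality of spectral subspaces it provides feeds into the subsequent development. So your argument should be read as the implication ``trace property $\Rightarrow$ multiplicity bound'' inside the paper's framework, not as an independent proof of the cited result; as your own closing remark concedes, the trace property is doing the essential balancing and cannot be removed from this approach.
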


Among our main results, we prove the f\/inite summability of certain spectral triples (ordinary and twisted), we therefore def\/ine those terms:
\begin{Definition}
\label{Def:TrSp}
Let $A$ be a unital $C^*$-algebra. An odd (ordinary) \emph{spectral triple}, also called an odd \emph{unbounded Fredholm module}, is a triple $(\Aa, \Hh, D)$ where
\begin{itemize}\itemsep=0pt
\item
$\Hh$ is a Hilbert space and $\pi \colon A\to B(\Hh)$ a $*$-representation of $A$ as bounded operators on~$\Hh$,
\item
$D$ is a selfadjoint unbounded operator -- which we will call the \emph{Dirac operator}~-- with domain~$\dom(D)$,
\end{itemize}
such that
\begin{enumerate}\itemsep=0pt
\item[(i)]
$(1+D^2)^{-1}$ is a compact operator,
\item[(ii)]
the subalgebra $\Aa$ of all $a\in A$ such that
\begin{gather*}
\pi(a) (\dom(D)) \ \subseteq \dom(D)
\qquad
 \text{and}
\qquad
[D,\pi(a)] \ \text{ extends to a bounded map on }\Hh
\end{gather*}
is dense in $A$.
\end{enumerate}
An \emph{even spectral triple} is given by the same data, but we further require that a grading $\gamma$ be given on $\Hh$ such that (i)~$A$ acts by even operators, (ii)~$D$ is odd.
\end{Definition}

\begin{Remark}
\label{Rk:SymmIdeals}
For a selfadjoint operator $D$, condition~(i) 
of the def\/inition above is actually equivalent to $\exists\, \lambda \in \R {\setminus} \{ 0 \}$ s.t.\ $(D+i \lambda)^{-1}$ is a compact operator.
\end{Remark}

To def\/ine \emph{finitely summable} spectral triples, we now need a brief reminder regarding \emph{trace ideals} (also known as \emph{symmetric ideals}), for which we follow Chapter~IV of~\cite{NCG}.
For more details concerning symmetrically normed operator ideals and singular traces we refer the reader to~\cite{TraceIdealsSimon} and~\cite{SingularTracesLSZ}.

\begin{Definition}
\label{Def:DixmierSum}
For $p>1$, the ideal $\SymmIdeal^{p^+}$ (also denoted $\SymmIdeal^{(p,\infty )}$ in \cite{NCG} and $\Jj_{p, \omega}$ in \cite[p.~21]{TraceIdealsSimon}) consists of all compact operators~$T$ on~$\Hh$ such that
\begin{gather*}
\| T\|_{p^+} := \sup_k \frac{ \sigma_k(T)}{k^{(p-1)/p}} < \infty,
\end{gather*}
where $\sigma_k$ is def\/ined as the supremum of the trace norms of~$T E$, when $E$ is an orthonormal projection of dimension~$k$, i.e.,
\begin{gather*}
 \sigma_k(T) := \sup \{ \| T E \|_1 , \dim E = k \}.
\end{gather*}
Equivalently, $\sigma_k(T)$ is the sum of the $k$ largest eigenvalues (counted with their multiplicities) of the positive compact operator $|T| := (T^* T)^{1/2}$. The def\/inition extends to the case of $p = 1$: $\SymmIdeal^{1^+}$~is the ideal of compact operators~$T$ s.t.\
\begin{gather*}
\| T \|_{1^+} := \sup_k \frac{ \sigma_k(T)}{\log k} < \infty .
\end{gather*}
The elements of $\SymmIdeal^{p^+}$ are called \emph{$p^+$-summable} (or $(p, \infty )$-summable -- see \cite[Section~IV.2$\alpha$, p.~299 and following]{NCG}).
\end{Definition}

A \emph{spectral dimension} for spectral triples is def\/ined as follows
\begin{Definition}
A spectral triple is \emph{$p^+$-summable} if $(1 + D^2)^{-1/2} \in \SymmIdeal^{p^+}$.
\end{Definition}

Finally, we will consider \emph{twisted spectral triples} (also called \emph{$\sigma$-spectral triples}) as introduced in \cite[Def\/inition~3.1]{TrSpTypeIII}. This is a spectral triple just like in Def\/inition~\ref{Def:TrSp}, but for a f\/ixed automorphism~$\sigma$ of~$\Aa$, the bounded commutators condition (denoted (ii) 
above) is replaced by
\begin{enumerate}\itemsep=0pt
\item[(ii)]
the subalgebra $\Aa$ of all $a\in A$ such that
\begin{itemize}\itemsep=0pt
\item
$\pi(a) (\dom(D)) \subseteq \dom(D)$,
\item
$D\pi(a) - \pi(\sigma(a)) D$ extends to a bounded map on $\Hh$
\end{itemize}
is dense in $A$.
\end{enumerate}

In this paper, we will need the subalgebras $\Aa^k$ for $k \geqslant 0$, corresponding to the $C^k$-dif\/fe\-ren\-tiab\-le class. Following \cite[Section~2.2]{DerivBratteli}, we introduce the space
\begin{gather*}
\Aa^m := \big\{ a \in A\colon g \mapsto \alpha_g(a) \text{ is in }C^m(G, A) \big\}.
\end{gather*}
Let us f\/ix a basis $(\partial _i)$ of the Lie algebra $\LieG$. For such a choice of basis, the inf\/initesimal genera\-tors~$\partial _i$ act as derivations $\Aa^m \to \Aa^{m-1}$. According to~\cite[Example~2.2.4, p.~41]{DerivBratteli}, $\Aa^m$ equipped with the norm
\begin{gather*}
\| a \|_m := \|a \| + \sum_{k=1}^m \sum_{i_1=1}^n \cdots \sum_{i_k=1}^n \frac{\| \partial_{i_1} \cdots \partial_{i_k}(a) \|}{k !},
\end{gather*}
is a Banach algebra with $\|a b \|_m \leqslant \| a \|_m \| b\|_m$. In particular, if $h \in \Aa^1$, then $e^{\lambda h} \in \Aa^1$, for all complex number~$\lambda$~-- see also Lemma~\ref{Lem:CVExp} below for a more precise estimate.

Following the density properties established in \cite{DerivBratteli} (see, e.g., Def\/inition~2.2.15, p.~47), the intersection $\Aa^\infty = \bigcap_{j=0}^\infty \Aa^j$ is a dense $*$-subalgebra of the $C^*$-algebra $A$, which is stable under the derivations $\partial _i$.

\section[Hodge--de~Rham Dirac operator and $C^*$-dynamical systems]{Hodge--de~Rham Dirac operator and $\boldsymbol{C^*}$-dynamical systems}
\label{Sec:HodgeOperator}

In this article, we consider a f\/ixed~$A$, a $C^*$-algebra with an ergodic action $\alpha$ of a compact Lie group~$G$ of dimension~$n$. We write~$\Aa$ for $\Aa^\infty$, the ``smooth subalgebra'' of~$A$, which can alternatively be def\/ined as
\begin{gather*}
\Aa := \{ a \in A \colon g \mapsto \alpha_g(a) \text{ is in }C^\infty (G, A) \}.
\end{gather*}
The Chevalley--Eilenberg cochain complex with coef\/f\/icients in $\Aa$ provides a complex that we interpret as ``dif\/ferential forms'' on $\Aa$. For the reader's convenience and to f\/ix notations, we provide a reminder of this construction. For all $k \in \N$,
\begin{gather*}
\Omega^k := \Aa \otimes \bigwedge^k \LieG^*,
\end{gather*}
where $\LieG^*$ denotes the linear forms on~$\LieG$, the Lie algebra of the Lie group~$G$. Given a scalar product on~$\LieG^*$ (e.g., obtained from the Killing form), we can extend it to a scalar product on~$\bigwedge^k \LieG^*$ by setting
\begin{gather*}
\langle v_1 \wedge \cdots \wedge v_k, w_1 \wedge \cdots \wedge w_k \rangle := \det( \langle v_i, w_j \rangle ),
\end{gather*}
i.e., the determinant of the matrix of scalar products. We f\/ix an orthonormal basis $(\omega_j)_{j = 1, \ldots , n}$ of $\LieG^*$ for this scalar product and consider its dual basis $(\partial _j)_{j=1, \ldots , n}$ in~$\LieG$.

Following \cite[the model of~(4.6), p.~157]{KnappLieGp}, we write the exterior derivative of the complex
\begin{gather}
d( a \otimes \omega_{i_1} \wedge \omega_{i_2} \wedge \cdots \wedge \omega_{i_K} ) =\sum_{j = 1}^n \partial _j(a) \otimes \omega_j \wedge \omega_{i_1} \wedge \omega_{i_2} \wedge \cdots \wedge \omega_{i_K} \nonumber\\
\qquad{}
 - \frac{1}{2} \sum_{k=1}^K \sum_{\alpha, \beta} (-1)^{k+1} c^{i_k}_{\alpha \beta} a \otimes \omega_\alpha \wedge \omega_\beta \wedge \omega_{i_1} \wedge \cdots \wedge \omega_{i_{k-1}} \wedge \omega_{i_{k+1}} \wedge \cdots \wedge \omega_{i_K},\label{Eqn:Defd}
\end{gather}
where $[\partial _i, \partial _j] = \sum\limits_{k=1}^n c^k_{i j} \partial _k$~-- the $c^{k}_{i j}$ are called the \emph{structure constants} of the Lie algebra~$\LieG$. A~lengthy but straightforward computation proves that this exterior derivative satisf\/ies $d^2 = 0$ on~$\Omega^\bullet$, therefore $(\Omega^\bullet , d)$ is a~complex.

\begin{Remark}
The Chevalley--Eilenberg complex is available even for noncompact groups $G$ and nonergodic actions. In other words, the square $d^2$ actually vanishes even when~$G$ is \emph{not} a~\emph{compact} Lie group and when the action of~$G$ on~$A$ is \emph{not} ergodic.
\end{Remark}

The natural product on $\Omega^\bullet$ is
\begin{gather}
\label{Eqn:ProdOmega}
(a \otimes v_1 \wedge \cdots \wedge v_k) \cdot (a' \otimes w_{1} \wedge \cdots \wedge w_{k'}) := a a' \otimes v_1 \wedge \cdots \wedge v_k \wedge w_1 \wedge \cdots \wedge w_{k'},
\end{gather}
i.e., the product of a $k$-form with a $k'$-form is a $k+k'$-form. In particular, for all~$k$, $\Omega^k$ is an $\Aa$-bimodule. The exterior derivative~$d$ is compatible with the right module structure in the following sense
\begin{gather*}
d( a a'\otimes \omega_{i_1} \wedge \omega_{i_2} \wedge \cdots \wedge \omega_{i_K} ) =\sum_{j = 1}^n \partial _j(a a') \otimes \omega_j \wedge \omega_{i_1} \wedge \omega_{i_2} \wedge \cdots \wedge \omega_{i_K} \\
\qquad\quad{}
 - \frac{1}{2} \sum_{k} \sum_{\alpha, \beta} c^{i_k}_{\alpha \beta} a a'\otimes \omega_\alpha \wedge \omega_\beta \wedge \omega_{i_1} \wedge \cdots \wedge \omega_{i_{k-1}} \wedge \omega_{i_{k+1}} \wedge \cdots \wedge \omega_{i_K}
\\
\qquad{}= \left( \sum_{j = 1}^n \partial _j(a) \otimes \omega_j \wedge \omega_{i_1} \wedge \omega_{i_2} \wedge \cdots \wedge \omega_{i_K} \right) a' \\
\qquad\quad{}
+\sum_{j = 1}^n a\partial _j(a') \otimes \omega_j \wedge \omega_{i_1} \wedge \omega_{i_2} \wedge \cdots \wedge \omega_{i_K} \\
\qquad\quad{}
 - \frac{1}{2} \left( \sum_{k} \sum_{\alpha, \beta} c^{i_k}_{\alpha \beta} a \otimes \omega_\alpha \wedge \omega_\beta \wedge \omega_{i_1} \wedge \cdots \wedge \omega_{i_{k-1}} \wedge \omega_{i_{k+1}} \wedge \cdots \wedge \omega_{i_K} \right) a'
\\
\qquad{}
= d( a \otimes \omega_{i_1} \wedge \omega_{i_2} \wedge \cdots \wedge \omega_{i_K} ) a'
+(-1)^K (a \otimes \omega_j \wedge \omega_{i_1} \wedge \omega_{i_2} \wedge \cdots \wedge \omega_{i_K}) d(a').
\end{gather*}
Since we want to treat conformal deformations of the original structure, we follow~\cite{GaussBonnet-ConnesTretkoff} and f\/ix a positive invertible element $e^{h} \in \Aa^1$, where~$h$ is a smooth selfadjoint element in~$\Aa^1$.
Then we def\/ine a scalar product on $\Omega^k$ by the formula
\begin{gather}
\label{Eqn:ScalProd}
(a \otimes v_1 \wedge \cdots \wedge v_k, a' \otimes w_1 \wedge \cdots \wedge w_k )_\varphi := \varphi_0\big( a^* a' e^{(n/2-k)h}\big) \det( \langle v_i, w_j \rangle ),
\end{gather}
where $\varphi_0$ is the unique $G$-invariant state on~$A$, which is actually a trace according to \cite[Theorem~3.1, p.~8]{ErgodActAHK}. We set the scalar product of two forms of dif\/ferent degrees to vanish. The scalar product obtained for $h = 0$ is the one we call the \emph{natural scalar product} on forms. We def\/ine the Hilbert space $\Hh_\varphi$ as the completion of $\Omega^\bullet$ for the scalar product~\eqref{Eqn:ScalProd}. In the particular case of $h = 0$, we obtain our reference Hilbert space $\Hh$. We will also need the Hilbert spaces $\Hh_{0, \varphi} := \GNS(\Aa, \varphi)$ and $\Hh_{0} := \GNS(\Aa, \varphi_0)$ as well as the Hilbert spaces $\Hh_k := \Hh_0 \otimes \bigwedge^k \LieG^*$~-- i.e., the completion of $k$-forms~-- and $\Hh_{k, \varphi}$.

To understand why we choose the form \eqref{Eqn:ScalProd} for the conformal deformation, we compare with the commutative case of a $n$-dimensional compact manifold $M$, where we have the following property: if the Riemannian metric is transformed by $g \rightsquigarrow \lambda g$ (for $\lambda > 0$), then the (pointwise) norm of all vectors is multiplied by~$\lambda^{1/2}$ and thus the pointwise norm of $1$-forms is multiplied by~$\lambda^{-1/2}$. This in turn implies that the pointwise norm of $k$-form is multiplied by~$\lambda^{-k/2}$. Finally, the (global) scalar product of $k$-forms is the integral of the pointwise scalar products. Since under the conformal deformation, the total volume of the manifold~$M$ is multiplied by~$\lambda^{n/2}$, the (global) scalar products of $k$-forms are multiplied by~$\lambda^{n/2-k}$. In particular, if~$n$ is even and $k = n/2$, then the scalar product on $n/2$-forms is left invariant under the conformal deformation.

In order to study $d$ and its adjoint, we introduce the degree $1$ maps $T_j \colon \bigwedge^\bullet \LieG^* \to \bigwedge^\bullet \LieG^*$ def\/ined for all $j \in \{1, \ldots, n\}$ by
\begin{gather*}
T_j( v_1 \wedge \cdots \wedge v_k) = \omega_j \wedge v_1 \wedge \cdots \wedge v_k.
\end{gather*}
Let $\RightMult_{x}$ denote the right multiplication operator for any $x \in A$: $\RightMult_{x}(a) = a x$, and let $B^{i_k}_{\alpha \beta}$ be the bounded operator on $\bigwedge^\bullet \LieG^*$ def\/ined using the basis $(\omega_j)$ by
\begin{gather*}
B^{i_k}_{\alpha \beta}( \omega_{i_1} \wedge \omega_{i_2} \wedge \cdots \wedge \omega_{i_K}) = \omega_\alpha \wedge \omega_\beta \wedge \omega_{i_1} \wedge \cdots \wedge \omega_{i_{k-1}} \wedge \omega_{i_{k+1}} \wedge \cdots \wedge \omega_{i_K}.
\end{gather*}
We can now give an explicit form to the operator $d$ and its (formal) adjoint for the unperturbed metric.
\begin{Lemma}
\label{Lem:Formed}
With the previous notations, when $h = 0$, the operator $d$ can be written
\begin{gather*}
d = \sum_j \partial_j \otimes T_j - \frac{1}{2} \sum_{k,\alpha,\beta} c^{i_k}_{\alpha \beta} \otimes B^{i_k}_{\alpha \beta}
\end{gather*}
and its adjoint $d^*$ is
\begin{gather*}
d^* = \sum_j \partial_j \otimes T_j^* - \frac{1}{2} \sum_{k,\alpha, \beta} \overline{c^{i_k}_{\alpha \beta}} \otimes \big(B^{i_k}_{\alpha \beta}\big)^*.
\end{gather*}
\end{Lemma}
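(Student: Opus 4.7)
My plan splits the lemma into two essentially independent calculations: reading the formula for $d$ directly off the definition~\eqref{Eqn:Defd}, and then computing $d^*$ via a tensor-product adjoint calculation.

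For the expression of $d$, I would reinterpret each summand in~\eqref{Eqn:Defd} as an operator on the factorised space $\Aa \otimes \bigwedge^\bullet \LieG^*$. The first sum $\sum_j \partial_j(a) \otimes \omega_j \wedge \omega_{i_1} \wedge \cdots \wedge \omega_{i_K}$ is precisely $\bigl(\sum_j \partial_j \otimes T_j\bigr)(a \otimes \omega_{i_1} \wedge \cdots \wedge \omega_{i_K})$ by the very definition of $T_j$ as left-wedging by~$\omega_j$; the second sum matches $-\frac{1}{2}\sum_{k,\alpha,\beta} c^{i_k}_{\alpha\beta}\otimes B^{i_k}_{\alpha\beta}$ by the definition of $B^{i_k}_{\alpha\beta}$, with the alternating sign $(-1)^{k+1}$ either absorbed into $B^{i_k}_{\alpha\beta}$ or compensated by reordering of wedge factors. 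This step is pure rewriting with no real content.

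For $d^*$, I would exploit the fact that the unperturbed scalar product~\eqref{Eqn:ScalProd} with $h = 0$ is a genuine tensor product, $(a \otimes v, a' \otimes w) = \varphi_0(a^* a')\langle v, w\rangle$. Consequently, the adjoint of an elementary operator $D \otimes S$ on $\Hh \cong \Hh_0 \otimes \bigwedge^\bullet \LieG^*$ is simply $D^* \otimes S^*$. Applied summand by summand, this turns the wedge-type operators $T_j$ and $B^{i_k}_{\alpha\beta}$ into their Hermitian adjoints on the finite-dimensional space $\bigwedge^\bullet \LieG^*$ and complex-conjugates the scalars $c^{i_k}_{\alpha\beta}$.

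The one genuinely analytical input is the identification of $\partial_j^*$ on $\Hh_0 = \GNS(\Aa, \varphi_0)$. Here I would combine three facts: (i)~$G$-invariance of~$\varphi_0$, which yields $\varphi_0 \circ \partial_j = 0$ (so the integration-by-parts boundary term vanishes); (ii)~the derivation identity $\partial_j(a^* b) = \partial_j(a^*) b + a^* \partial_j(b)$; and (iii)~compatibility with the involution, $\partial_j(a^*) = \partial_j(a)^*$, which follows from the $\alpha_g$ being $*$-automorphisms. Applied to $\langle \partial_j a, b\rangle_{\varphi_0} = \varphi_0(\partial_j(a)^* b)$, these yield the required relation between $\partial_j^*$ and $\partial_j$. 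The main obstacle is really the bookkeeping: one must track the $(-1)^{k+1}$ factors in~\eqref{Eqn:Defd} and confirm that the sign convention for $\partial_j$ adopted here is the one making it selfadjoint (rather than anti-selfadjoint) on $\Hh_0$, so that the final adjoint sum assembles into the stated formula without any spurious overall sign.
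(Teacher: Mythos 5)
Your reading of the formula for $d$ and your tensor--factorisation strategy for $d^*$ (the $h=0$ scalar product factorises, so the adjoint of $X\otimes S$ is $X^*\otimes S^*$, turning $T_j$, $B^{i_k}_{\alpha\beta}$ into their adjoints and conjugating the structure constants) is exactly the paper's route; the paper treats all of this as self-explanatory. The genuine gap sits at the one point you yourself single out as the analytical input, namely the identification of $\partial_j^*$ on $\Hh_0$, and your fact~(iii) settles it with the wrong sign. If $\partial_j(a^*)=\partial_j(a)^*$, as you assert, then your facts (i) and (ii) give
\begin{gather*}
\langle \partial_j a, a'\rangle = \varphi_0\big(\partial_j(a^*)\,a'\big) = \varphi_0\big(\partial_j(a^*a')\big) - \varphi_0\big(a^*\,\partial_j(a')\big) = -\langle a, \partial_j a'\rangle,
\end{gather*}
i.e.\ $\partial_j$ is formally \emph{skew}-adjoint on $\Hh_0$, and the first term of $d^*$ comes out as $-\sum_j\partial_j\otimes T_j^*$ rather than $+\sum_j\partial_j\otimes T_j^*$ as the lemma states. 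You flag at the end that one must ``confirm that the sign convention for $\partial_j$ is the one making it selfadjoint'', but you leave this unresolved, and under your hypothesis~(iii) the confirmation in fact fails. The paper's entire proof consists of exactly this point and rests on the opposite relation: it uses $\partial_j(a)^* = -\partial_j(a^*)$ (the derivations normalised as on the noncommutative torus, i.e.\ like $\tfrac{1}{i}\partial$), which combined with $\varphi_0(\partial_j(\cdot))=0$ yields $\langle[\partial_j(a)],[a']\rangle = \varphi_0(a^*\partial_j(a'))$, i.e.\ formal \emph{selfadjointness} of $\partial_j$ on $\Hh_0$, and this is what makes the stated formula for $d^*$ come out without a spurious sign. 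To repair your argument you must replace~(iii) by this convention (or else the first sum in $d^*$ acquires a minus sign); as written, the crux of the lemma is decided the wrong way.

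A minor bookkeeping point: the factor $(-1)^{k+1}$ in the definition \eqref{Eqn:Defd} is not contained in the displayed definition of $B^{i_k}_{\alpha\beta}$, so of your two proposed fixes (``absorbed into $B^{i_k}_{\alpha\beta}$ or compensated by reordering'') the first is the one that has to be made explicit; this is harmless, but it should be said rather than left as an alternative, since the formula in the lemma carries no explicit sign.
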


\begin{proof}
The only point that is not self-explanatory is the behavior of $\partial_j$ with respect to the trace $\varphi_0$
\begin{gather*}
\langle [\partial_j(a)], [a'] \rangle = \varphi_0( \partial_j(a)^* a') = \varphi_0( -\partial_j(a^*) a') = \varphi_0(a^* \partial_j(a')),
\end{gather*}
where we used the relations $\partial_j(a)^* = - \partial_j(a^*)$ and $\varphi_0( \partial_j(a)) = 0$.
\end{proof}

\begin{Lemma}
\label{Lem:CVExp}
Let $h$ be an element of $\Aa^1$ and $\partial $ be an infinitesimal generator of $G$, acting as a~derivation on $\Aa^1$, then~$\partial ( e^{h} )$ is in the $C^*$-algebra $A$ and satisfies
\begin{gather*}
\big\| \partial \big( e^{h} \big) \big\| \leqslant \| \partial (h) \| e^{\| h\| }.
\end{gather*}
In particular, for a scalar parameter $v \to 0$, $\partial (e^{v h}) \to 0$.
\end{Lemma}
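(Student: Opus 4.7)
The plan is to expand $e^h$ as a norm-convergent power series and apply the derivation $\partial$ term by term, controlling the result by the Leibniz rule. Set the partial sums $S_N := \sum_{k=0}^{N} h^k/k!$. The preceding paragraph of the paper establishes that $(\Aa^1, \|\cdot\|_1)$ is a Banach algebra and that $e^h \in \Aa^1$, so $S_N \to e^h$ in the $\|\cdot\|_1$-norm. Since $\partial \colon \Aa^1 \to A$ is a bounded linear map with respect to $\|\cdot\|_1$ on the source and $\|\cdot\|$ on the target (indeed $\|\partial a\| \leqslant \|a\|_1$ by definition of the norm $\|\cdot\|_1$), this yields $\partial(S_N) \to \partial(e^h)$ in $A$, so in particular $\partial(e^h) \in A$.

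Next I would get the quantitative bound on each $\partial(S_N)$ by induction on $k$: the Leibniz rule gives
\begin{gather*}
\partial(h^k) \;=\; \sum_{j=0}^{k-1} h^j\, \partial(h)\, h^{k-1-j},
\end{gather*}
so $\|\partial(h^k)\| \leqslant k \,\|h\|^{k-1}\, \|\partial(h)\|$. Summing yields
\begin{gather*}
\|\partial(S_N)\| \;\leqslant\; \|\partial(h)\| \sum_{k=1}^{N} \frac{\|h\|^{k-1}}{(k-1)!} \;\xrightarrow[N\to\infty]{}\; \|\partial(h)\|\, e^{\|h\|},
\end{gather*}
and passing to the limit gives the stated inequality $\|\partial(e^h)\| \leqslant \|\partial(h)\|\, e^{\|h\|}$.

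For the final assertion, I would apply the estimate with $h$ replaced by $vh \in \Aa^1$: this gives $\|\partial(e^{vh})\| \leqslant |v|\,\|\partial(h)\|\, e^{|v|\, \|h\|}$, which tends to $0$ as $v \to 0$.

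The only delicate point is the exchange of $\partial$ with the infinite sum. This is not automatic because $\partial$ is not bounded on $A$; it is handled cleanly by working in the Banach algebra $(\Aa^1, \|\cdot\|_1)$, where convergence of $S_N$ automatically controls $\partial(S_N)$ through the inequality $\|\partial(\cdot)\| \leqslant \|\cdot\|_1$. Alternatively, one could invoke closedness of the derivation $\partial$ as an unbounded operator on $A$, combined with the termwise estimate above showing that $(\partial(S_N))$ is Cauchy in $A$.
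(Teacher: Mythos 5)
Your proposal is correct and follows essentially the same route as the paper: both use continuity of $\partial$ as a map from the Banach algebra $(\Aa^1,\|\cdot\|_1)$ to $A$ to pass $\partial$ through the $\Aa^1$-convergent partial sums of the exponential series, and both obtain the quantitative bound from the Leibniz-rule estimate $\|\partial(h^k)\|\leqslant k\,\|h\|^{k-1}\|\partial(h)\|$ summed against $1/k!$. Your explicit bound $\|\partial(e^{vh})\|\leqslant |v|\,\|\partial(h)\|\,e^{|v|\,\|h\|}$ for the last claim is a slightly more precise version of the paper's "follows immediately."
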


\begin{proof}
As an operator from $\Aa^1$ to $A$, the derivation $\partial $ is continuous, therefore we can estimate~$\partial (e^h)$ by using an $\Aa^1$-converging sequence, like the partial sums of~$e^{h}$.

For this sequence, using the derivation property, we get
\begin{gather*}
\left\| \partial \left( \sum_{k=0}^N \frac{h^k}{k !} \right) \right\| \leqslant \sum_{k=1}^N k \frac{\| \partial h \| }{k} \frac{\| h \|^{k-1}}{(k-1)!} \leqslant \| \partial h \| e^{\| h \|}.
\end{gather*}
The property $\partial (e^{v h}) \to 0$ as $v \to 0$ follows immediately.
\end{proof}

We call $d_\varphi$ the operator def\/ined on $\Hh_\varphi$ by the formula~\eqref{Eqn:Defd}. Once the scalar product~\eqref{Eqn:ScalProd} is def\/ined, we want to def\/ine an adjoint $d^*_\varphi$ to $d_\varphi$ for this scalar product. The Hodge--de~Rham operator that we would like to study \textit{in fine} is $d_\varphi + d_\varphi^*$. However, the unbounded operator $d_\varphi$ is \textit{a~priori} arbitrary, so it is not clear that it admits a densely def\/ined adjoint. To clarify the relations between~$\Hh$ and~$\Hh_\varphi$, we introduce the following lemma:
\begin{Lemma}
\label{Lem:EquivRep}
For any selfadjoint $h \in \Aa^1$,
\begin{itemize}\itemsep=0pt
\item
the Hilbert space $\Hh_{\varphi}$ is equipped with a $G$-representation defined on degree~$k$ forms by $\RepV_g( [a \otimes v_1 \wedge \cdots \wedge v_k]_\varphi) = [\alpha_g(a) \otimes v_1 \wedge \cdots \wedge v_k]_\varphi$, which leads to a $G$-equivariant left $\Aa$-module structures on $\Hh_{\varphi}$;
\item
the map $L \colon \Hh_0 \to \Hh_{0,\varphi}$ defined by $L([a]):= [a]_\varphi$ is invertible and intertwines the $G$-equivariant left $\Aa$-module structures on $\Hh_0$ and $\Hh_{0,\varphi}$.
\end{itemize}
Of course, $L$ extends to $L \otimes \id \colon \Hh \to \Hh_\varphi$ which is still a continuous and invertible map. We denote its adjoint by $H \colon \Hh_\varphi \to \Hh$, whose explicit form is
\begin{gather*}
H( [a \otimes v_1 \wedge \cdots \wedge v_k]_\varphi ) =\big [a e^{(n/2-k)h} \otimes v_1 \wedge \cdots \wedge v_k\big].
\end{gather*}
Finally, $\Hh$ and $\Hh_\varphi$ are related by the unitary map $U \colon \Hh \to \Hh_\varphi$ given on degree $k$ forms by
\begin{gather*}
U( [a \otimes v_1 \wedge \cdots \wedge v_k] ) = \big[a e^{-(n/2-k)h/2} \otimes v_1 \wedge \cdots \wedge v_k\big]_\varphi.
\end{gather*}
\end{Lemma}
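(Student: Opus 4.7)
The plan is to verify the four assertions of the lemma in turn: the $G$-representation on $\Hh_\varphi$ together with its compatibility with the left $\Aa$-module structure, the invertibility of $L$, the adjoint formula for $H$, and the unitarity of $U$. Three basic ingredients suffice throughout: the tracial property of $\varphi_0$ (Theorem~\ref{Thm:ErgodAct}), its $G$-invariance, and the fact that for every $s\in\R$ the element $e^{sh}$ is positive, invertible, bounded above and below by positive scalars, and commutes with $e^{th}$ for every $t\in\R$ (both being functional calculus elements of the selfadjoint~$h$).

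For $\RepV_g$, well-definedness on representatives is immediate; on a degree-$k$ component, using $G$-invariance of $\varphi_0$ to move $\alpha_g$ through the trace, one rewrites
\[
\|\RepV_g[a\otimes v_1\wedge\cdots\wedge v_k]_\varphi\|_\varphi^2=\varphi_0\bigl(a^*a\,\alpha_{g^{-1}}\bigl(e^{(n/2-k)h}\bigr)\bigr)\det(\langle v_i,v_j\rangle).
\]
Since $\alpha_{g^{-1}}$ is a $*$-automorphism it preserves the spectrum of $e^{(n/2-k)h}$, so this is uniformly bounded in $g$, and $\RepV_g$ extends to a bounded (in general non-unitary) operator on each $\Hh_{k,\varphi}$. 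The group law, strong continuity, and the equivariance $\RepV_g(a'\cdot\xi)=\alpha_g(a')\cdot\RepV_g(\xi)$ descend from the corresponding properties on~$\Aa$.

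For $L$, combining the tracial property of $\varphi_0$ with the spectral bound $0<c_1\leqslant e^{nh/2}\leqslant c_2$ yields the norm equivalence $c_1\|[a]\|_{\Hh_0}^2\leqslant\|L[a]\|_\varphi^2=\varphi_0\bigl(a^*a\,e^{nh/2}\bigr)\leqslant c_2\|[a]\|_{\Hh_0}^2$, so $L$ extends to a bounded invertible operator; intertwining with both the left $\Aa$-action and the $G$-action is trivial since $L$ acts as the identity on representatives, and the same remains true after tensoring with $\id$ on $\bigwedge^k\LieG^*$. The formula for $H$ follows from a direct comparison of inner products: for $\xi=[b\otimes w_1\wedge\cdots\wedge w_k]\in\Hh$ and $\eta=[a\otimes v_1\wedge\cdots\wedge v_k]_\varphi\in\Hh_\varphi$, both $((L\otimes\id)\xi,\eta)_\varphi$ and $(\xi,H\eta)_\Hh$ evaluate to $\varphi_0\bigl(b^*a\,e^{(n/2-k)h}\bigr)\det(\langle w_i,v_j\rangle)$, which uniquely determines~$H$.

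For $U$, a computation that cycles one factor $e^{-(n/2-k)h/2}$ through the trace gives
\[
\|U[a\otimes v_1\wedge\cdots\wedge v_k]\|_\varphi^2=\varphi_0\bigl(a^*a\,e^{-(n/2-k)h/2}e^{(n/2-k)h}e^{-(n/2-k)h/2}\bigr)\det(\langle v_i,v_j\rangle)=\varphi_0(a^*a)\det(\langle v_i,v_j\rangle),
\]
using that the three commuting exponents sum to zero. The same calculation applied in reverse shows that the candidate inverse $V\colon\Hh_\varphi\to\Hh$, defined on generators by $V[a\otimes v_1\wedge\cdots\wedge v_k]_\varphi:=[a\,e^{(n/2-k)h/2}\otimes v_1\wedge\cdots\wedge v_k]$, is also an isometry; since $U$ and $V$ are mutual inverses on the dense algebraic subspaces, their bounded extensions to the completions are mutual inverses, and $U$ is unitary. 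The single delicate point is the first assertion: because the conformal factor is not $G$-invariant, $\RepV_g$ is bounded but \emph{not} unitary on $\Hh_\varphi$, so the uniform-in-$g$ bound has to be extracted by pushing $\alpha_g$ into the trace before invoking the spectral bounds on~$e^{(n/2-k)h}$.
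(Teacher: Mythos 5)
Your argument is correct and follows the paper's own route: push $\alpha_g$ (resp.\ the weight $e^{(n/2-k)h}$) through the $G$-invariant trace $\varphi_0$, use the tracial property together with the two-sided spectral bounds on $e^{sh}$ to get boundedness of $\RepV_g$ and the norm equivalence defining $L$, and then identify $H$ and $U$ by direct inner-product computations. The only difference is that you explicitly carry out the verifications for $H$ and $U$ (and note the non-unitarity of $\RepV_g$), which the paper leaves as an ``easy exercise''; this is a completion of detail, not a different method.
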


\begin{Remark}
The map $U$ def\/ined above is unitary, but it does not intertwine the $G$-structures on $\Hh$ and $\Hh_\varphi$.
\end{Remark}

\begin{proof}
Since the sum $\Hh_{\varphi} = \bigoplus_k \Hh_{k, \varphi}$ is f\/inite, it suf\/f\/ices to check that $\RepV_g$ is continuous on each~$\Hh_{k, \varphi}$ separately. Since $\RepV_g$ does not act on $\bigwedge^\bullet \LieG^*$, it is enough to prove continuity on $\GNS(A, \tilde{\varphi})$ where $\tilde{\varphi}(a) = \varphi_0(a e^{-h_k})$ and $h_k = -(n/2 -k)h \in \Aa^1$ (corresponding to forms of degree~$k$). We get
\begin{gather*}
\| \alpha_g(a) \|_{\tilde{\varphi}}^2 = \varphi_0\big( \alpha_g(a)^* \alpha_g(a) e^{-h_k} \big) = \varphi_0\big( a^* a \alpha_{g^{-1}}\big(e^{-h_k}\big) \big) \\
\hphantom{\| \alpha_g(a) \|_{\tilde{\varphi}}^2}{}
= \varphi_0\big( a e^{-h_k/2} e^{h_k/2}\alpha_{g^{-1}}\big(e^{-h_k}\big)e^{h_k/2} e^{-h_k/2} a^*\big) \leqslant K \varphi_0\big( a e^{-h_k}a^*\big) = K \| a \|_{\tilde{\varphi}}^2,
\end{gather*}
for a constant $K = \| e^{h_k/2}\alpha_{g^{-1}}(e^{-h_k})e^{h_k/2} \|$. The above (scalar) inequality follows from the inequality of operators
\begin{gather*}
a e^{-h_k/2} e^{h_k/2}\alpha_{g^{-1}}\big(e^{-h_k}\big)e^{h_k/2} e^{-h_k/2} a^* \leqslant a e^{-h_k/2} K e^{-h_k/2} a^* = K a e^{-h_k} a^*,
\end{gather*}
which is valid since $e^{h_k/2}\alpha_{g^{-1}}(e^{-h_k})e^{h_k/2}$ is a positive operator. It then suf\/f\/ices to apply the positive functional~$\varphi_0$. Once we know that the map~$\RepV_g$ is def\/ined on the full Hilbert space, proving that it is compatible with the left $\Aa$-module structure is a formality.

The process is similar for $L$: it is clear from the def\/inition that if the map $L$ exists, then it intertwines the $G$-equivariant left $\Aa$-module structures on~$\Hh$ and~$\Hh_\varphi$. It remains to prove that~$L$ is well-def\/ined and invertible.

We f\/irst evaluate
\begin{gather*}
\| L( a) \|_{\tilde{\varphi}}^2 = \varphi_0\big( a^* a e^{-h_k}\big) = \varphi_0\big( a e^{-h_k} a^*\big) \leqslant \big\| e^{-h_k} \big\| \varphi_0(a a^*) = \big\| e^{-h_k} \big\| \, \| a \|_{\varphi_0}^2,
\end{gather*}
by the same argument as above.

To prove that $L$ is invertible, consider the norm of its inverse
\begin{gather*}
\| a \|_{\varphi_0}^2 = \varphi_0\big( a e^{-h_k/2} e^{h_k} e^{-h_k/2} a^*\big) \leqslant \big\| e^{h_k} \big\| \varphi_0\big( a e^{-h_k} a^*\big) = \big\|e^{h_k} \big\| \, \| a \|_{\tilde{\varphi}}^2.
\end{gather*}
The evaluation of the adjoint $H$ of~$L$ and of the unitary map $U \colon \Hh \to \Hh_\varphi$ is an easy exercise.
\end{proof}

With the previous notations, we see that $d_\varphi = (L \otimes \id) d (L \otimes \id)^{-1}$ and thus (at least formally) $d_\varphi^* = H^{-1} d^* H$. However, in order to facilitate the comparison between $d_\varphi + d^*_\varphi$ acting on $\Hh_\varphi$ and $d + d^*$ acting on~$\Hh$, we ``push'' $d_\varphi + d_\varphi^*$ to $\Hh$ using the unitary $U$. This leads us to the operators $D_u$ studied in the Proposition~\ref{Prop:Du} below. But f\/irst, for $h = h^*$, we need to introduce the operators $K_{u} \colon \Hh \to \Hh$, where $u \in \R$, def\/ined by
\begin{gather}
\label{Eqn:DefKu}
K_{u}( [a \otimes v_1 \wedge \cdots \wedge v_k ]) = \big[a e^{(n/2 - k)h u} \otimes v_1 \wedge \cdots \wedge v_k\big].
\end{gather}
This is a one-parameter group of invertible selfadjoint operators. Moreover, following our remark on $\Aa^1$ at the end of Section~\ref{Sec:Reminders}, for all $u \in \R$, the operators $K_u$ preserve the space $\Aa^1 \otimes \bigwedge^\bullet \LieG^*$.

In the proof below, we consider the orthogonal projections $\Pi_k \colon \Hh \to \Hh_k$ onto the completion of the $k$-forms, for all $k \in \{0, \ldots, n \}$.
\begin{Proposition}
\label{Prop:Du}
For all $u \in [0,1]$, we consider two unbounded operators defined on the dense domain $\Core = \Aa^1 \otimes \bigwedge^\bullet \LieG^* \subseteq \Hh$,
\begin{gather*}
d_u := K_{u} d K_{-u},
\qquad
d_u^* := K_{-u} d^* K_{u}.
\end{gather*}
We have:
\begin{enumerate}\itemsep=0pt
\item[$(1)$] 
if $E_{+,u}$ and $E_{-, u}$ are, respectively, the closures in $\Hh$ of the images of the operators $d_u$ and $d_u^*$, then $E_{+,u}$ and $E_{-, u}$ are orthogonal in $\Hh$; we denote by $\Pi_{+,u}$ and $\Pi_{-,u}$, respectively, the orthogonal projections on these spaces;
\item[$(2)$] 
the operator
\begin{gather*}
D_{u} = K_{u} d K_{-u} + K_{-u} d^* K_{u},
\end{gather*}
is essentially selfadjoint on a common core domain $\Core = \Aa^1 \otimes \bigwedge^\bullet \LieG^*$.
\end{enumerate}
The family of operators $D_u$ satisfies the estimate
\begin{gather}
\label{Eqn:EstimateDu}
\| (D_{u+v} - D_u)(\omegat) \| \leqslant o_v(1) \| D_u \omegat \| + o_v(1) \| \omegat \|,
\end{gather}
where $\omegat$ is any vector in the common selfadjointness domain and following Landau's notations, $o_v(1)$ stands for functions of $v$ which tend to $0$ for $v \to 0$. We can choose these two functions independently of the parameter $u \in [0,1]$.
\end{Proposition}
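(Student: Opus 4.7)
Orthogonality in item~(1) reduces to $d^2=0$. Since $\varphi_0$ is a trace, each $K_u$ is self-adjoint on $\Hh$, and by construction $K_u K_{-u} = \id$. Hence for any $\alpha,\beta\in\Core$,
\begin{gather*}
\langle d_u\alpha, d_u^*\beta\rangle = \langle K_u d K_{-u}\alpha,\, K_{-u} d^* K_u\beta\rangle = \langle d K_{-u}\alpha,\, d^* K_u\beta\rangle = \langle d^2 K_{-u}\alpha,\, K_u\beta\rangle = 0,
\end{gather*}
so after passing to closures one gets $E_{+,u}\perp E_{-,u}$.

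The heart of the argument is estimate~\eqref{Eqn:EstimateDu}. Using the one-parameter group property $K_{u+v}=K_uK_v$, a direct calculation gives
\begin{gather*}
D_{u+v}-D_u = K_u K_v\,[d,K_{-v}]\,K_{-u} + K_{-u} K_{-v}\,[d^*,K_v]\,K_u,
\end{gather*}
and the operators $K_{\pm u}$, $K_{\pm v}$ are bounded on $\Hh$ uniformly for $u\in[0,1]$ and $v$ small. Expanding $[d,K_{-v}]$ on a $k$-form $[a\otimes\omega]$ via formula~\eqref{Eqn:Defd} and the Leibniz rule $\partial_j(a f_k(v)) = \partial_j(a)f_k(v) + a\,\partial_j(f_k(v))$, where $f_k(v):=e^{-(n/2-k)hv}$, and using the fact that $d$ raises degree by $1$ so that $K_{-v}$ brings in $f_{k+1}(v)$ instead of $f_k(v)$, one finds that $[d,K_{-v}]$ decomposes into a ``principal'' part of the form $R_{f_k(v)-f_{k+1}(v)}\circ\big(\sum_j \partial_j\otimes T_j\big)$, where $R_c$ denotes right-multiplication by $c$, plus a fully bounded remainder coming from the $a\,\partial_j(f_k(v))$ terms and from the structure-constant summands of $d$. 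Since $f_k(v)-f_{k+1}(v) = -vh+O(v^2)$ and Lemma~\ref{Lem:CVExp} gives $\|\partial_j(f_k(v))\|=o_v(1)$, one obtains
\begin{gather*}
\|[d,K_{-v}]\psi\| \leqslant o_v(1)\bigl(\|d\psi\|+\|\psi\|\bigr),
\end{gather*}
and symmetrically for $[d^*,K_v]$, with bounds independent of $u\in[0,1]$. Item~(1) applied at $u=0$ gives $\|d\psi\|,\|d^*\psi\|\leqslant\|D_0\psi\|$. Running the same commutator analysis on $D_u-D_0 = K_u[d,K_{-u}]+K_{-u}[d^*,K_u]$ yields a uniform reverse bound $\|D_0\psi\|\leqslant C\bigl(\|D_u\psi\|+\|\psi\|\bigr)$ for $u\in[0,1]$. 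Substituting $\psi=K_{\mp u}\omegat$ and invoking uniform boundedness of $K_{\pm u}$ delivers~\eqref{Eqn:EstimateDu}.

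Essential self-adjointness (item~(2)) is then obtained by propagating along the parameter. The unperturbed operator $D_0=d+d^*$ is essentially self-adjoint on $\Core$: by Lemma~\ref{Lem:Formed} it equals the Dirac-type expression $\sum_j \partial_j\otimes(T_j+T_j^*)$ modulo a bounded term, with $\partial_j$ infinitesimal generators of the unitary $G$-representation on $\Hh_0$ and $T_j+T_j^*$ bounded selfadjoint endomorphisms of $\bigwedge^\bullet\LieG^*$, so the argument of~\cite{TrSpLieGpGG} applies. Given~\eqref{Eqn:EstimateDu}, for $v$ small enough (independently of $u$) the relative $D_u$-bound of the symmetric perturbation $D_{u+v}-D_u$ is strictly less than~$1$, and the core-preserving version of Kato--Rellich then shows that essential self-adjointness on $\Core$ propagates from $D_u$ to $D_{u+v}$. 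The compact interval $[0,1]$ is covered in finitely many such steps. The main obstacle is the careful bookkeeping of $[d,K_{-v}]$ across degree-changing arguments and the transfer of the relative bound from $D_0$ to $D_u$ with constants uniform in $u$; once these commutator estimates are in place, the Kato--Rellich step is routine.
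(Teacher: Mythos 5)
Your strategy is essentially the paper's: handle $u=0$ via the unitary $G$-action/Peter--Weyl structure, write $D_{u+v}-D_u$ as conjugated commutators with $K_{\pm v}$, control these with Lemma~\ref{Lem:CVExp}, and propagate essential selfadjointness by Kato--Rellich in uniformly small steps. Your bound $\|[d,K_{-v}]\psi\|\leqslant o_v(1)(\|d\psi\|+\|\psi\|)$, automatically uniform in $u$ because $[d,K_{-v}]$ does not involve $u$, is correct and is the paper's estimate~\eqref{Eqn:EstimateKdRK} in a slightly cleaner packaging. The genuine gap is in how you convert this into~\eqref{Eqn:EstimateDu}. Your route via $u=0$ fails twice. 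First, the ``reverse bound'' $\|D_0\psi\|\leqslant C(\|D_u\psi\|+\|\psi\|)$ is not produced by ``the same commutator analysis'': that analysis bounds $D_u-D_0$ relative to $d$ and $d^*$, i.e., relative to $D_0$, giving $\|D_u\psi\|\leqslant C(\|D_0\psi\|+\|\psi\|)$; inverting this would require the constant to be $<1$, which it is not for $u$ of order one. Second, even granting that bound, substituting $\psi=K_{\mp u}\omegat$ leaves you with $\|D_uK_{\mp u}\omegat\|$ on the right-hand side, not $\|D_u\omegat\|$, and $K_{\mp u}$ does not commute with $D_u$; so \eqref{Eqn:EstimateDu} does not follow from your chain of inequalities.

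The repair is short and uses only what you already have: since $dK_{-u}=K_{-u}d_u$ and $d^*K_u=K_u d_u^*$, one gets $\|dK_{-u}\omegat\|\leqslant\|K_{-u}\|\,\|d_u\omegat\|$ and $\|d^*K_u\omegat\|\leqslant\|K_u\|\,\|d_u^*\omegat\|$, with $\|K_{\pm u}\|$ uniformly bounded for $u\in[0,1]$; then the orthogonality of item~(1) \emph{at the parameter} $u$ gives the Pythagoras identity~\eqref{Eqn:Pyth}, hence $\|d_u\omegat\|,\|d_u^*\omegat\|\leqslant\|D_u\omegat\|$, and \eqref{Eqn:EstimateDu} follows with constants uniform in $u$ -- this is exactly how the paper concludes. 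A smaller point: in item~(1) the step $\langle dK_{-u}\alpha, d^*K_u\beta\rangle=\langle d^2K_{-u}\alpha,K_u\beta\rangle$ is not literally defined on $\Core=\Aa^1\otimes\bigwedge^\bullet\LieG^*$, because $K_{-u}\alpha$ is only of class $C^1$, so $dK_{-u}\alpha$ need not lie in the domain of $d$; you should either argue first on the finite-dimensional Peter--Weyl blocks $V\otimes\bigwedge^k\LieG^*$, where $d$ is bounded and $d^2=0$ is unambiguous (the paper's route), or approximate in the $C^1$-norm by elements of $\Aa^\infty\otimes\bigwedge^\bullet\LieG^*$ and pass to the limit. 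With these two repairs your argument is complete and coincides in substance with the paper's proof.
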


\begin{proof}
Regarding point~(1), 
we start by proving the property for $u = 0$, i.e., for the untwisted case. There, following Lemma~\ref{Lem:EquivRep} the trace $\varphi_0$ is $G$-invariant and therefore the action $\RepV_g$ of~$G$ on~$\Hh_0$ is unitary. Consequently, the $G$-representation can be decomposed into a direct sum of f\/inite-dimensional $G$-representations. Let us denote by~$V$ one of these f\/inite-dimensional spaces.

It is clear from the def\/inition \eqref{Eqn:Defd} that both $V \otimes \bigwedge^k \LieG^* \subseteq \Hh$ and its orthogonal are stable under the action of $d$. Thus the restriction of~$d$ to the f\/inite-dimensional space $V \otimes \bigwedge^k \LieG^*$ is bounded and admits an adjoint $d^*$ whose form is given by Lemma~\ref{Lem:Formed}. Varying the space $V$, we see that $d^*$ is def\/ined on $\Dd$, the algebraic direct sum of $V \otimes \bigwedge^k \LieG^*$, which is a dense subset of~$\Hh$. If we restrict to the case of forms $\omegat$, $\omegat'$ in the space $V \otimes \bigwedge^k \LieG^* \subseteq \Hh$, we have
\begin{gather*}
\langle d \omegat, d^* \omegat' \rangle = \langle d^2 \omegat, \omegat' \rangle = 0,
\end{gather*}
there are no considerations of domains for $d$ and $d^*$, since we consider f\/inite-dimensional spaces. The same argument applied to dif\/ferent f\/inite vector spaces $V$ proves that $E_{+,0}$ (the image of~$d$) and $E_{-,0}$ (the image of $d^*$) are orthogonal.

To treat point~(1) 
for a general $u \in [0,1]$, we note that $E_{+,u} = K_u E_{+,0}$: indeed, if $\xi = \lim d \omegat_n$, then $K_u \xi = \lim K_u d K_{-u} (K_u \omegat_n)$ and vice versa. Similarly, $E_{-,u} = K_{-u} E_{-,0}$. Given $K_u e_+ \in E_{+,u}$ and $K_{-u} e_- \in E_{-,u}$, we have
\begin{gather*}
\langle K_u e_+, K_{-u} e_- \rangle = \langle e_+, e_- \rangle = 0,
\end{gather*}
since $K_u$ is selfadjoint and $e_+ \in E_{+,0}$, $e_- \in E_{-,0}$ are orthogonal. This proves the requested orthogonality relation.

Regarding point~(2), 
let us start by giving a sketch of the proof: we f\/irst prove that $D_0 = D$ is essentially selfadjoint on the requested domain. Using the estimate~\eqref{Eqn:EstimateDu}, we then apply Kato--Rellich theorem to show that if $D_{u}$ is essentially selfadjoint for the domain $\Core$, then so is the operator $D_{u + v}$ for all $|v| \leqslant \varepsilon$, where~$\varepsilon$ is independent of the point $u \in [0,1]$ chosen. As a~consequence, all operators $D_u$ are essentially selfadjoint for the f\/ixed domain.

We f\/irst prove that $D_0$ is selfadjoint. This is done by using the Peter--Weyl decomposition of $\Hh_0$ for the unitary action $\RepV_g$ of $G$ on $\Hh_0$. As mentioned in point~(1), 
the restriction of $d$ to this f\/inite-dimensional space $V \otimes \bigwedge^k \LieG^* \subseteq \Hh$ is well-def\/ined, as is its adjoint $d^*$. Varying the space~$V$, we consider~$\Dd$, the direct sum of $V \otimes \bigwedge^k \LieG^*$, which is a dense subset of~$\Hh$.

In this situation, we can def\/ine $D = d + d^*$ on $\Dd$. If we restrict $D$ to a component $V \otimes \bigwedge^k \LieG^* \subseteq \Hh$, it is formally selfadjoint by def\/inition. It therefore admits an orthonormal basis of eigenvectors with real associated eigenvalues. It follows that $\Ran(D +i)$ and $\Ran(D -i)$ are dense in~$\Hh$, and this is enough to prove that $D$ is essentially selfadjoint on the domain $\Core$ (see \cite[Corollary, p.~257]{ReedSimon1}).

Let us now consider an arbitrary $u \in [0,1]$. We want to f\/ind $\varepsilon > 0$ uniform in $u$ and small enough so that for all $v$ with $|v| \leqslant \varepsilon$, the operator $D_{u+v}$ is essentially selfadjoint. By def\/inition,
\begin{gather*}
D_{u+v} = K_{u+v} d K_{-(u+v)} + K_{-(u+v)} d^* K_{u+v}.
\end{gather*}
If we introduce $R_v = K_v -1$, then we can write
\begin{gather*}
K_{u+v} = K_u (1 + R_v),
\qquad
K_{-(u+v)} = (1 + R_{-v}) K_{-u}.
\end{gather*}
It is clear that both $R_v$ and $R_{-v}$ are bounded with $\|R_v \| \to 0$, $\|R_{-v} \| \to 0$ for $v \to 0$ and by def\/inition, for all $v \in \R$, $R_v$ commutes with~$K_u$ for $u \in \R$.

We write
\begin{gather*}
D_{u+v} = K_{u} (1 + R_v) d (1+ R_{-v}) K_{-u} + K_{-u} (1+R_{-v}) d^* (1+R_v) K_{u} \\
\hphantom{D_{u+v}}{}
= K_u d K_{-u} + K_u d R_{-v} K_{-u} + K_u R_v d K_{-u} + K_u R_v d R_{-v} K_{-u} \\
\hphantom{D_{u+v}=}{}
+ K_{-u} d^* K_{u} + K_{-u} d^* R_v K_u + K_{-u} R_{-v} d^* K_u + K_{-u} R_{-v} d^* R_v K_u.
\end{gather*}
The sum of the terms $K_u d K_{-u}$ and $K_{-u} d^* K_{u}$ gives back $D_u$. Since $E_{+, u}$ and $E_{-,u}$ are orthogonal, we have
\begin{gather}
\label{Eqn:Pyth}
\| D_u \omegat \|^2 = \| \Pi_{+, u}( D_u \omegat ) \|^2 + \| \Pi_{-, u}( D_u \omegat ) \|^2 = \| d_{u} \omegat \|^2 + \| d_u^* \omegat \|^2.
\end{gather}
Both $D_{u+v}$ and $D_u$ are symmetric operators, so their dif\/ference (namely the sum~$\Sigma$ of the six remaining terms) is a symmetric operator. By hypothesis, $D_u$ is selfadjoint. According to Kato--Rellich theorem as stated in \cite[Theorem~X.12, p.~162]{ReedSimon2}, it therefore only remains to prove that~$\Core$ is also a domain for~$\Sigma$ and that for all $\omegat \in \Core$,
\begin{gather*}
\| \Sigma(\omegat) \| \leqslant a \| D_u \omegat \| + b \| \omegat \|,
\end{gather*}
where both real numbers $a$, $b$ are positive and $a < 1 $. It is clear from the def\/inition of~$K_{\pm u}$ and~$R_v$ that their actions preserve the core $\Core$ of $C^1$-functions on $G$ and thus $\Sigma(\omegat)$ has a well-def\/ined meaning for all $\omegat \in \Core$. We decompose $\omegat \in \Core$ into a~sum $\omegat = \sum_{k} \omegat_k$ of $C^1$-forms of degree $k$ and start by an estimate of the dif\/ferent terms $\| \Sigma \omegat_k \|$ for any f\/ixed~$k$.

Remember from Lemma \ref{Lem:Formed} that $d$ can be written
\begin{gather*}
d = \sum_j \partial_j \otimes T_j - \frac{1}{2} \sum_{k,\alpha,\beta} c^{i_k}_{\alpha \beta} \otimes B^{i_k}_{\alpha \beta},
\end{gather*}
where the dif\/ferent $B^{i_k}_{\alpha \beta}$ are bounded operators. We remark that the $B^{i_k}_{\alpha \beta}$ commute with right multiplications, like the one appearing in the def\/inition of $K_u$ acting on an element of given degree. For $\omegat = a \otimes \vt$ of degree $k$, we write
\begin{gather*}
K_u d R_{-v} K_{-u} a \otimes \vt = \sum \partial _j(a e^{-(n/2-k)h u}(e^{-(n/2-k) hv} -1) )e^{(n/2-(k+1)) h u} \otimes T_j( \vt ) \\
\hphantom{K_u d R_{-v} K_{-u} a \otimes \vt =}{}
- \frac{1}{2} \sum_{k} \sum_{\alpha, \beta} c^{i_k}_{\alpha \beta} a e^{-(n/2-k)h u}(e^{-(n/2-k) hv} -1) e^{(n/2-(k+1)) h u} \otimes B^{i_k}_{\alpha \beta} \vt \\
\hphantom{K_u d R_{-v} K_{-u} a \otimes \vt }{}
= \sum \partial _j(a e^{-(n/2-k)h u})e^{(n/2-(k+1)) h u} (e^{-(n/2-k) hv} -1) \otimes T_j( \vt ) \\
\hphantom{K_u d R_{-v} K_{-u} a \otimes \vt =}{}
- \frac{1}{2} \sum_{k} \sum_{\alpha, \beta} c^{i_k}_{\alpha \beta} a e^{-h u} (e^{-(n/2-k) hv} -1) \otimes B^{i_k}_{\alpha \beta} \vt \\
\hphantom{K_u d R_{-v} K_{-u} a \otimes \vt =}{}
+ \sum a e^{-(n/2-k)h u}\partial _j(e^{-(n/2-k) hv} -1) )e^{(n/2-(k+1)) h u} \otimes T_j( \vt ) \\
\hphantom{K_u d R_{-v} K_{-u} a \otimes \vt }{}
= R_{-v}(K_u d K_{-u})(a \otimes \omegat) \! + K_{u} \circ\! \sum_{j} (\RightMult_{\partial _j(e^{-(n/2-k) h v})}\! \otimes T_j) \circ K_{-u}(a \otimes \vt).
\end{gather*}
Taking a linear combination to treat the case of a sum $\omegat = \sum \omegat_k$, we get
\begin{gather*}
K_u d R_{-v} K_{-u} = R_{-v}(K_u d K_{-u}) + K_{u} \circ \bigg( \sum_{j,k} (\RightMult_{\partial _j(e^{-(n/2-k) h v})} \otimes T_j) \circ \Pi_k \bigg) \circ K_{-u}.
\end{gather*}
In this equality, $\sum_{j,k} (\RightMult_{\partial _j(e^{-(n/2-k) h v})} \otimes T_j) \circ \Pi_k$ is a f\/inite sum of bounded operators. As a~consequence of Lemma~\ref{Lem:CVExp}, the norm of these operators tend to $0$ for $ v \to 0$. We already know that $R_{-v}$ tends to $0$ in norm for $v \to 0$, we therefore get the estimate
\begin{gather}
\label{Eqn:EstimateKdRK}
\| K_u d R_{-v} K_{-u}( \omegat) \| \leqslant o_v(1) \| K_u d K_{-u}( \omegat) \| + o_v(1) \| \omegat \|.
\end{gather}
The two functions $o_v(1)$ can be taken uniform in $u \in [0,1]$, since $[0,1]$ is a compact.

The term $K_u R_v d K_{-u}$ is easily treated: $K_u R_v d K_{-u} = R_v K_u d K_{-u}$. The term $K_u R_v d R_{-v} K_{-u}$ is processed similarly: $ K_u R_v d R_{-v} K_{-u} = R_v K_u d R_{-v} K_{-u}$ and then the estimate \eqref{Eqn:EstimateKdRK} enables us to write
\begin{gather*}
\| K_u R_v d R_{-v} K_{-u}(\omegat) \| \leqslant o_v(1) \big\| \big(K_u d K_u^{-1}\big)(\omegat) \big\| + o_v(1) \| \omegat \|.
\end{gather*}
As a result, we get
\begin{gather}
\label{Eqn:Estd}
\| (d_{u+v} - d_u) \omegat \| \leqslant o_v(1) \| d_u \omegat \| + o_v(1) \| \omegat \|.
\end{gather}
Lemma \ref{Lem:Formed} af\/fords a similar treatment of the term $K_{-u} d^* R_v K_u $, just replacing $T_j$ by $T^*_j$, $B^{i_k}_{\alpha \beta}$ by $(B^{i_k}_{\alpha \beta})^*$ and $c^{i_k}_{\alpha \beta}$ by $\overline{c^{i_k}_{\alpha \beta}}$. We get an estimate
\begin{gather}
\label{Eqn:EstdAdj}
\| (d_{u+v}^* - d_u^*) \omegat \| \leqslant o_v(1) \| d_u^* \omegat \| + o_v(1) \| \omegat \|.
\end{gather}
Using the equation \eqref{Eqn:Pyth}, which ensures that $\| d_u \omegat \| \leqslant \| D_u \omegat \|$ and $\| d_u^* \omegat \| \leqslant \| D_u \omegat \|$, we can combine~\eqref{Eqn:Estd} and~\eqref{Eqn:EstdAdj} to show that the relation \eqref{Eqn:EstimateDu} is satisf\/ied.

We can therefore apply the Kato--Rellich theorem for all $u \in [0,1]$ and this proves that all~$D_u$ (including~$D_1$) have selfadjoint extensions with the same core $\Core = \Aa^1 \otimes \bigwedge^\bullet \LieG^*$.
\end{proof}

\begin{Remark}
\label{Rk:Domain}
It appears from the proof of point~(2) 
that we could also take $\Aa^\infty \otimes \bigwedge^\bullet \LieG^*$ as core for the operator $D_0$ (using the Peter--Weyl decomposition). If we further assume $h \in \Aa^\infty $, the rest of the proof applies \textit{verbatim} and shows that all $D_u$ have a common core, namely $\Aa^\infty \otimes \bigwedge^\bullet \LieG^*$.
\end{Remark}

\begin{Corollary}
\label{Cor:FiniteSumma}
For all selfadjoint elements $h \in \Aa^1$ and all parameters $u \in [0,1]$, the opera\-tors~$D_u$ are $n^+$-summable.
\end{Corollary}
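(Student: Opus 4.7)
My plan is to reduce the $n^+$-summability of every $D_u$ to the corresponding fact at $u = 0$, and then propagate it along the parameter interval $[0,1]$ using the relative-boundedness estimate \eqref{Eqn:EstimateDu} established in Proposition~\ref{Prop:Du}.

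For the base case $u = 0$, the operator $D_0 = d + d^*$ acts on $\Hh = \Hh_0 \otimes \bigwedge^\bullet \LieG^*$ and, by the explicit form in Lemma~\ref{Lem:Formed}, leaves invariant each finite-dimensional block $V \otimes \bigwedge^\bullet \LieG^*$ obtained from the Peter--Weyl decomposition of $\Hh_0$ into $G$-isotypic components. On each such block $D_0^2$ is a bounded perturbation of the Casimir of $V$ acting via $\sum_j \partial_j^2 \otimes \id$, and Proposition~\ref{Prop:multiplicity} bounds the multiplicity of $V$ inside $\Hh_0$ by $\dim V$. A standard Weyl-type eigenvalue count on a compact Lie group of dimension $n$ then yields $(1 + D_0^2)^{-1/2} \in \SymmIdeal^{n^+}$; this is essentially the finite-summability statement for the Lie--Dirac operator proved in \cite{TrSpLieGpGG}, transposed to the present Chevalley--Eilenberg setting.

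For the inductive step I would use the second resolvent identity. Estimate \eqref{Eqn:EstimateDu} gives, for $\eta \in \Hh$ and sufficiently small $|v|$,
\begin{gather*}
\big\| (D_{u+v} - D_u)(D_u + i\lambda)^{-1} \eta \big\| \leqslant o_v(1) \big\| D_u (D_u + i\lambda)^{-1} \eta \big\| + o_v(1) \big\| (D_u + i\lambda)^{-1} \eta \big\|,
\end{gather*}
so that for $|\lambda|$ large one has $\| (D_{u+v} - D_u)(D_u + i\lambda)^{-1} \| < 1$. The identity
\begin{gather*}
(D_{u+v} + i\lambda)^{-1} - (D_u + i\lambda)^{-1} = (D_{u+v} + i\lambda)^{-1}(D_u - D_{u+v})(D_u + i\lambda)^{-1}
\end{gather*}
then places $(D_{u+v} + i\lambda)^{-1}$ in the two-sided ideal $\SymmIdeal^{n^+}$ as soon as $(D_u + i\lambda)^{-1}$ lies there; since the singular values of $(D + i\lambda)^{-1}$ and of $(1 + D^2)^{-1/2}$ are comparable for any self-adjoint $D$, this is equivalent to $n^+$-summability of $D_{u+v}$. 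Because the $o_v(1)$ factors in \eqref{Eqn:EstimateDu} are uniform in $u \in [0,1]$, a single $\varepsilon > 0$ works at every $u$, and a finite covering of the compact interval $[0,1]$ propagates the property from $u = 0$ to all $u \in [0,1]$.

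The main obstacle is really the base-case Weyl count, which relies crucially on the multiplicity bound of Proposition~\ref{Prop:multiplicity} and hence on ergodicity; the perturbative step itself is essentially mechanical once the uniform estimate \eqref{Eqn:EstimateDu} of Proposition~\ref{Prop:Du} is at hand.
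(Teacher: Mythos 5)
Your proposal is correct and follows essentially the same route as the paper: the base case $u=0$ is handled, as in the paper, by the Peter--Weyl decomposition, the multiplicity bound of Proposition~\ref{Prop:multiplicity} and comparison with the classical Hodge--de~Rham operator on $G$ (i.e., the argument of \cite{TrSpLieGpGG}), and the propagation in $u$ via the uniform estimate \eqref{Eqn:EstimateDu}, a Neumann-series/resolvent factorisation $(D_{u+v}+i\lambda)^{-1}=(D_u+i\lambda)^{-1}\big(1+(D_{u+v}-D_u)(D_u+i\lambda)^{-1}\big)^{-1}$ and compactness of $[0,1]$ is exactly the paper's Kato-type stability argument. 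Your phrase ``bounded perturbation of the Casimir'' is slightly loose (the difference contains first-order terms), but this does not affect the argument, which the paper phrases instead as an eigenvalue comparison with the reference operator on $L^2(G)\otimes\bigwedge^\bullet\LieG^*$.
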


\begin{proof}
In the untwisted case, i.e., for $D_0$, we can follow the argument of Theorem~5.5 of \cite{TrSpLieGpGG} to prove that $d +d^*$ is $n^+$-summable, where $n$ is the dimension of~$G$. Indeed, according to Proposition~\ref{Prop:multiplicity} from~\cite{ErgodCpctGpHKLS}, as $G$-vector spaces, we have $\Hh \inj \Hh_\text{ref}$ where $\Hh_\text{ref} := L^2(G) \otimes \bigwedge^k \LieG^*$. Moreover, the operator $d +d^* := D_\text{ref}$ on this space is just the Hodge--de~Rham operator on~$G$ and therefore it is $n^+$-summable. Since $D_\text{ref}$ also preserves the f\/inite-dimensional spaces $V \otimes \bigwedge^k \LieG^*$ obtained by Peter--Weyl decomposition, the eigenvalues of $|D|$ coincide with those of $|D_\text{ref}|$ except that they may have lower (and possibly zero) multiplicities. Consequently, the same computation as in~\cite{TrSpLieGpGG} proves that~$D$ is $n^+$-summable.

To extend this property to all $D_u$ for $u \in [0,1]$, we f\/irst note that to prove $D_u$ is $n^+$-summable, it suf\/f\/ices to show that the operator $(D_u + i)^{-1}$ is in the symmetric ideal $\SymmIdeal^{n^+}$ -- as mentioned in Remark \ref{Rk:SymmIdeals}. The existence of the operator $(D_u + i)^{-1}$ is a consequence of Proposition \ref{Prop:Du}. The discussion above proves that $(D_0 + i)^{-1}$ is in this ideal.

We then use \cite[Theorem~1.16, p.~196]{Kato} to prove that if $(D_u + i)^{-1} \in \SymmIdeal^{n^+}$ then for some $\varepsilon > 0$ small enough but independent of $u \in [0,1]$, and for any $v$ in $|v| \leqslant \varepsilon$, then $(D_{u+v} + i)^{-1} \in \SymmIdeal^{n^+}$. For all $u$, $v$,
\begin{gather*}
(D_{u+v} + i ) - (D_u + i) = D_{u+v} - D_u,
\end{gather*}
and to apply Kato's stability property, we need to give a relative bound on $D_{u+v} - D_u$, expressed in terms of $D_u +i$. We are going to obtain this using the relation~\eqref{Eqn:EstimateDu}. Indeed, since we know that $D$ is selfadjoint, $\langle D \xi, \xi \rangle = \langle \xi, D \xi \rangle $ and thus
\begin{gather*}
\| (D + i) \xi \|^2 = \| D \xi \|^2 + \| \xi \|^2.
\end{gather*}
which shows that $\| D \xi \| \leqslant \| (D + i) \xi \|$. From this fact and \eqref{Eqn:EstimateDu}, we deduce
\begin{gather*}
\| (D_{u+v} - D_u)(\omegat) \| \leqslant o_v(1) \| (D_u +i) \omegat \| + o_v(1) \| \omegat \|,
\end{gather*}
which let us apply \cite[Theorem~1.16, p.~196]{Kato} to $D_u +i$ and $D_{u+v} - D_u$, leading to the expression
\begin{gather*}
(D_{u+v} + i)^{-1} = (D_u + i)^{-1} \big(1 + (D_{u+v} - D_u)(D_u +i)^{-1}\big)^{-1},
\end{gather*}
where both $(D_{u+v} - D_u)(D_u +i)^{-1}$ and $(1 + (D_{u+v} - D_u)(D_u +i)^{-1})^{-1}$ are bounded operators. This expression shows that $(D_{u+v} + i)^{-1}$ is a product of $(D_u + i)^{-1}$ in the ideal $\SymmIdeal^{n^+}$ and a~bounded operator. It is therefore itself in the ideal $\SymmIdeal^{n^+}$ and this completes the proof.
\end{proof}

The operator $d_u$ of Proposition \ref{Prop:Du} induces a cochain complex:
\begin{Proposition}
The operator $d_u := K_u d K_{-u}$, defined from $d = \Pi_+ D$ on the domain of selfadjointness of $D_u$ is closable. Taking its closure, there is a cochain complex $\Complex[u]$
\begin{gather}
\label{Eqn:Complex}
0 \to \Hh_{0} \xrightarrow{d_{u,0}} \Hh_1 \to \cdots \to \Hh_{n-1} \xrightarrow{d_{u, n-1}} \Hh_n \to 0.
\end{gather}
\end{Proposition}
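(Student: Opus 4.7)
The plan is to prove closability via a formal adjoint computation on a convenient core, then observe that the closure inherits the grading, and finally verify the nilpotency $d_u \circ d_u = 0$ on a smooth subcore before propagating it to the closure by closedness.

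The first step relies on the dense subspace $\Core = \Aa^1 \otimes \bigwedge^\bullet \LieG^*$, on which the formula $d_u = K_u d K_{-u}$ is well defined and agrees with $\Pi_{+,u} D_u$ (by the orthogonality of $E_{+,u}$ and $E_{-,u}$ established in Proposition~\ref{Prop:Du}(1)). A direct manipulation of inner products yields, for all $\omegat, \omegat' \in \Core$,
\begin{gather*}
\langle K_u d K_{-u} \omegat, \omegat' \rangle = \langle \omegat, K_{-u} d^* K_u \omegat' \rangle,
\end{gather*}
once one observes that $K_u$ is selfadjoint (immediate from \eqref{Eqn:DefKu}, the trace property of $\varphi_0$ and the selfadjointness of $h$) and that $d^*$ from Lemma~\ref{Lem:Formed} is the formal adjoint of $d$ on $\Core$. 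Hence the Hilbert-space adjoint of $d_u$ contains the densely defined operator $d_u^* := K_{-u} d^* K_u$, which is the standard criterion for closability.

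Next, since $K_u$ acts by right multiplication by a degree-zero element on each $\Omega^k$, it commutes with the grading, whereas $d$ raises the grading by one; therefore the closure $\overline{d_u}$ also raises the grading by one. Restricting it to each $\Hh_k$ yields the closed arrow $d_{u,k} \colon \Hh_k \to \Hh_{k+1}$ appearing in~\eqref{Eqn:Complex}.

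For the cochain condition $d_{u,k+1} \circ d_{u,k} = 0$, I would work on a dense subspace $\Dd$ stable under $d_u$ and on which the identity $d_u^2 = K_u d K_{-u} K_u d K_{-u} = K_u d^2 K_{-u} = 0$ holds by inspection. When $h \in \Aa^\infty$, the natural choice is $\Dd := \Aa^\infty \otimes \bigwedge^\bullet \LieG^*$, which by Remark~\ref{Rk:Domain} is a common core and is preserved by $K_{\pm u}$ and by $d$. Then, for any $\omegat \in \dom(d_{u,k})$ approximated by $(\omegat_n) \subset \Dd$ with $\omegat_n \to \omegat$ and $d_u \omegat_n \to d_{u,k} \omegat$, the elements $d_u \omegat_n$ remain in $\Dd \subset \dom(d_{u,k+1})$ with $d_{u,k+1}(d_u \omegat_n) = 0$; closedness of $d_{u,k+1}$ then forces $d_{u,k} \omegat \in \dom(d_{u,k+1})$ and $d_{u,k+1} d_{u,k} \omegat = 0$, giving the desired complex.

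The main obstacle will be this last step: verifying that $\Dd$ is a genuine core for each graded piece $\overline{d_{u,k}}$, not merely for $D_u$. For $h \in \Aa^\infty$ this follows directly from Remark~\ref{Rk:Domain}; in the generic regularity regime $h \in \Aa^1$ one would adapt the Kato--Rellich perturbative machinery of Proposition~\ref{Prop:Du} to the separate graded pieces, exploiting the fact that $K_u$ commutes with the grading to obtain a graded analogue of the estimate~\eqref{Eqn:EstimateDu}.
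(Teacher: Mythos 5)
Your closability step is correct and takes a genuinely different route from the paper: you exhibit the densely defined operator $K_{-u} d^* K_u$ inside the adjoint of $d_u\vert_{\Core}$ (using selfadjointness of $K_u$ and the formal adjoint relation of Lemma~\ref{Lem:Formed}), which is the standard criterion for closability. The paper instead proves closability of $d$ directly by a Peter--Weyl pairing argument and then transfers it to $d_u$ via $K_{-u} d_u x_n = d K_{-u} x_n$, i.e., by conjugating with the bounded invertible operators $K_{\pm u}$. Both work; note only that since the statement takes the selfadjointness domain $\dom(D_u)$ as initial domain, you should extend your adjoint identity from $\Core$ to $\dom(D_u)$, which is immediate because $\Core$ is a core for $D_u$, $\Pi_{+,u}$ is bounded, and $d_u = \Pi_{+,u} D_u$ on $\Core$.

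The genuine gap is the nilpotency step under the stated regularity $h \in \Aa^1$. Your argument requires a dense subspace stable under $d$ \emph{and} under $K_{\pm u}$ on which $d^2 = 0$ holds literally, and $\Aa^\infty \otimes \bigwedge^\bullet \LieG^*$ is stable under $K_{\pm u}$ only when $h \in \Aa^\infty$; for general $h \in \Aa^1$ you propose to adapt the Kato--Rellich machinery to the graded pieces, but Kato--Rellich concerns selfadjointness of symmetric perturbations and produces neither a core for the non-symmetric closed operators $d_{u,k}$ nor the identity $d_u^2 = 0$. The paper's way out is simpler and you should adopt it: since $K_{\pm u}$ are bounded, invertible, degree-preserving and map $\Core$ onto itself, one has $\overline{d_u} = K_u\, \overline{d}\, K_{-u}$, so the range and kernel of the closed $d_u$ are the $K_u$-images of those of $\overline{d}$; hence the cochain property for arbitrary $u$ and arbitrary selfadjoint $h \in \Aa^1$ reduces to the case $u = 0$, where $d$ does not involve $h$ at all and $d^2 = 0$ is verified on $\Aa^\infty \otimes \bigwedge^\bullet \LieG^*$, which is graph-dense in the $\Aa^1$-forms by density of $\Aa^\infty$ in $\Aa^1$ for the $C^1$-norm (this is what the paper's terse ``$d_u^2 = K_u d^2 K_{-u} = 0$'' is doing). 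With this replacement your restriction to $h \in \Aa^\infty$ disappears and the proof covers the proposition as stated.
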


\begin{Remark}
In the complex \eqref{Eqn:Complex}, the map $d_{u,k} \colon \Hh_{k} \to \Hh_{k+1}$ is of course (the closure of) the restriction of~$d_u$ to $\Hh_k \cap \dom(D_u)$, where $\dom(D_u)$ is the domain of selfadjointness of~$D_u$.
\end{Remark}

\begin{proof}
We f\/irst treat the case of $d$ (for $h = 0$). In this case, if $x_n \to x$ and $y_n \to x$ while both~$d x_n$ and~$d y_n$ converge, we want to prove that $\lim d x_n = \lim d y_n$. Consider any $z \in \Hh$ which lives in a f\/inite-dimensional vector space $V \otimes \bigwedge^\bullet \LieG^*$ obtained from the Peter--Weyl decomposition. This ensures that $\Pi_+ z$ is in $V \otimes \bigwedge^\bullet \LieG^*$ and thus in the domain of $D$. We then have
\begin{gather*}
\langle z, \Pi_+ D x_n \rangle = \langle D \Pi_+ z, x_n \rangle \to \langle D \Pi_+ z, x \rangle \leftarrow \langle z, \Pi_+ D y_n \rangle.
\end{gather*}
Since we know that both $d x_n$ and $d y_n$ converge in $\Hh$ and that~$\Dd$, the algebraic direct sum of all $V \otimes \bigwedge^\bullet \LieG^*$ is dense, it is necessary that $\lim d x_n = \lim d y_n$ and this proves that~$d$ is closable.

It follows that the kernel $\ker( \overline{d})$ is closed. Since $\Aa^1 \otimes \bigwedge^\bullet \LieG^*$ is a core for $D$, any $x$ in the domain $\dom( \overline{d} )$ can be approximated by $x_n \in \Aa^1 \otimes \bigwedge^\bullet \LieG^*$ such that $x_n \to x$ and $d x_n \to d x$. The density of $\Aa^\infty $ inside $\Aa^1$ (as discussed at the end of Section~\ref{Sec:Reminders}) then provides an approximation of the original $x \in \dim( \overline{d} )$ by $y_n \in \Aa^\infty \otimes \bigwedge^\bullet \LieG^* = \Omega^\bullet$. For this sequence~$y_n$, we know from Section~\ref{Sec:HodgeOperator} that $d^2 y_n = 0$. By density, we obtain that~\eqref{Eqn:Complex} is a cochain complex.

Similarly, for $d_u = K_u d K_{-u}$ if $x_n \to x$ and $y_n \to x$ while both $d_u x_n$ and $d_u y_n$ converge, we have $K_{-u} x_n \to K_{-u} x \leftarrow K_{-u} y_n$ and $K_{-u} d_u x_n = d K_{-u} x_n$, $K_{-u} d_u y_n = d K_{-u} y_n$. Since $d$ is closable, we get $\lim K_{-u} d_u x_n = \lim K_{-u} d_u y_n$, which suf\/f\/ices to prove that $d_u$ is also closable. The cochain property then follows from $d_u^2 = K_u d^2 K_{-u} = 0$.
\end{proof}
In the rest of this section, we will be interested in the \emph{reduced cohomology} of the complex \eqref{Eqn:Complex}, namely the cohomology groups
\begin{gather}
\label{Eqn:RedCohom}
H^k\Complex[u] := \ker(d_{u,k})/ \overline{\Ran(d_{u,k-1})}.
\end{gather}

For any $u \in [0,1]$, let us write $E_{0,u}$ for the kernel of~$D_u$. We have the following Hodge decomposition theorem for the conformally perturbed metric:
\begin{Theorem}
\label{Thm:HodgeDecomp}
Let $G$ be a \emph{compact} Lie group of dimension $n$ acting \emph{ergodically} on a unital $C^*$-algebra $A$. With the notations introduced previously, for any parameter $u \in [0,1]$, there is a~decomposition of $\Hh$ into a~direct sum of orthogonal Hilbert spaces
\begin{gather*}
\Hh = E_{-,u} \oplus E_{0,u} \oplus E_{+,u}.
\end{gather*}
\end{Theorem}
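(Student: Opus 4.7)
The plan: combine the orthogonality $E_{+,u} \perp E_{-,u}$ already established in Proposition~\ref{Prop:Du}(1) with the abstract decomposition afforded by selfadjointness. Since $\overline{D_u}$ is selfadjoint by Proposition~\ref{Prop:Du}(2), standard operator theory gives the orthogonal splitting
\begin{gather*}
\Hh = \ker(\overline{D_u}) \oplus \overline{\Ran(\overline{D_u})} = E_{0,u} \oplus \overline{\Ran(\overline{D_u})},
\end{gather*}
so it suffices to show that $\overline{\Ran(\overline{D_u})}$ coincides with $E_{+,u} \oplus E_{-,u}$.

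One inclusion is essentially by construction: on the common core $\Core = \Aa^1 \otimes \bigwedge^\bullet \LieG^*$ one has $D_u = d_u + d_u^*$ with $d_u(\Core) \subseteq E_{+,u}$ and $d_u^*(\Core) \subseteq E_{-,u}$, and since $E_{+,u} \oplus E_{-,u}$ is already closed as an orthogonal sum of closed subspaces, we get $\overline{\Ran(\overline{D_u})} \subseteq E_{+,u} \oplus E_{-,u}$.

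The substantive step is the reverse inclusion, which I recast as $E_{0,u} \perp E_{\pm, u}$. Given $\xi \in E_{0,u}$, essential selfadjointness produces a core-approximating sequence $\omegat_n \in \Core$ with $\omegat_n \to \xi$ and $D_u \omegat_n \to 0$. The Pythagorean identity~\eqref{Eqn:Pyth} proved inside Proposition~\ref{Prop:Du} splits this convergence: the relation $\|D_u \omegat_n\|^2 = \|d_u \omegat_n\|^2 + \|d_u^* \omegat_n\|^2$ forces $d_u \omegat_n \to 0$ and $d_u^* \omegat_n \to 0$ separately. Feeding this into the formal adjoint relation $\langle d_u^* \cdot, \cdot \rangle = \langle \cdot, d_u \cdot \rangle$ on the core (which follows from Lemma~\ref{Lem:Formed} together with the selfadjointness of the bounded operators $K_{\pm u}$ on $\Hh$ and the cyclicity of $\varphi_0$), I obtain, for every $\omegat \in \Core$,
\begin{gather*}
\langle \xi, d_u \omegat \rangle = \lim_n \langle \omegat_n, d_u \omegat \rangle = \lim_n \langle d_u^* \omegat_n, \omegat \rangle = 0,
\end{gather*}
so $\xi$ is orthogonal to the dense subspace $\Ran(d_u|_\Core)$ of $E_{+,u}$, hence to all of $E_{+,u}$. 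Swapping the roles of $d_u$ and $d_u^*$ in the same calculation gives $\xi \perp E_{-,u}$.

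Collecting the pieces, $E_{0,u}$, $E_{+,u}$, $E_{-,u}$ are pairwise orthogonal and their sum contains both summands of the selfadjoint decomposition of $\Hh$, so the decomposition is established. The only subtle point—rather than a genuine obstacle—is the passage from $D_u \omegat_n \to 0$ to the simultaneous separate convergences $d_u \omegat_n \to 0$ and $d_u^* \omegat_n \to 0$; this is exactly where essential selfadjointness (to produce the core sequence) and the orthogonality $E_{+,u} \perp E_{-,u}$ (to apply the Pythagorean splitting) intervene together, and once both are invoked the remaining duality manipulation on $\Core$ requires no further analytic input.
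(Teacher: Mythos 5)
Your argument is correct and follows essentially the same route as the paper: the orthogonal splitting $\Hh = E_{0,u} \oplus \overline{\Ran(\overline{D_u})}$ coming from selfadjointness of the closure, combined with the orthogonality of $E_{+,u}$ and $E_{-,u}$ established in Proposition~\ref{Prop:Du}. The only difference is one of detail, not of method: the paper simply asserts $\overline{\Ran(D_u)} = E_{-,u} \oplus E_{+,u}$ (and also invokes the compact resolvent, which your version shows is unnecessary), whereas you justify that identification explicitly via the core-approximation, the Pythagorean identity~\eqref{Eqn:Pyth}, and formal adjointness of $d_u$ and $d_u^*$ on $\Core$.
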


\begin{proof}
The operator $D_u$ is selfadjoint with compact resolvent, as a consequence of Proposition~\ref{Prop:Du} and Corollary~\ref{Cor:FiniteSumma}. Thus, we have an orthogonal sum $\Hh = E_{0,u} \oplus \overline{\Ran(D_u)}$. Following Proposition~\ref{Prop:Du}, $\overline{\Ran(D_u)} = E_{-,u} \oplus E_{+,u}$ and the sum is orthogonal, which proves the result.
\end{proof}

We call the restriction of $D_u^2$ to $\Hh_k$ the \emph{Laplacian on $\Hh_k$} and denote it by $\Delta_k$, which is thus an unbounded operator on~$\Hh_k$, def\/ined on the domain $\Aa^\infty \otimes \bigwedge^k \LieG^*$. Note that $\Delta_k$ actually depends on our choice of conformal perturbation $h \in \Aa^1$.

\begin{Corollary}
\label{Cor:CohomGp}
Let $H^k\Complex[u]$ be the cohomology groups introduced in~\eqref{Eqn:RedCohom}, they identify naturally with the kernel of~$\Delta_k$, i.e.,
\begin{gather*}
\ker(\Delta_k) \simeq H^k\Complex[u].
\end{gather*}
\end{Corollary}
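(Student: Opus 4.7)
The plan is to realize cohomology classes via harmonic representatives, i.e., to show that each class in $H^k\Complex[u]$ contains a unique element of $\ker(\Delta_k)$. The main tool is the orthogonal decomposition of Theorem~\ref{Thm:HodgeDecomp} restricted to the degree-$k$ subspace $\Hh_k$. The key technical points concern the interaction of the grading with the operator $D_u$, which itself does not preserve degrees although $D_u^2$ does.

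First I would verify the identification of $\ker(\Delta_k)$ with the degree-$k$ part of $E_{0,u}$. Since $D_u$ is selfadjoint (Proposition~\ref{Prop:Du}), any element of $\dom(D_u^2)$ lies in $\dom(D_u)$, and the identity
\begin{gather*}
\langle D_u^2 \eta, \eta \rangle = \| D_u \eta \|^2
\end{gather*}
yields $\ker(D_u^2) = \ker(D_u) = E_{0,u}$. Because $D_u^2$ preserves the grading of $\Hh = \bigoplus_k \Hh_k$, an element $\omega = \sum_k \omega_k$ is in $\ker(D_u^2)$ iff each $\omega_k$ is, so $E_{0,u} = \bigoplus_k \ker(\Delta_k)$. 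In particular, every harmonic form $\eta \in \ker(\Delta_k)$ satisfies $D_u \eta = 0$, and splitting by degree via the Pythagoras relation~\eqref{Eqn:Pyth} $\| D_u \eta \|^2 = \| d_u \eta \|^2 + \| d_u^* \eta \|^2$ shows that $d_u \eta = 0$; thus $\ker(\Delta_k) \subseteq \ker(d_{u,k})$, giving a natural map $\ker(\Delta_k) \to H^k\Complex[u]$.

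Next I would identify the range spaces with their graded pieces. Since $d_u$ shifts degree by $+1$, its range is the algebraic sum $\bigoplus_k \Ran(d_{u,k-1})$, and because $\Hh = \bigoplus_k \Hh_k$ is an \emph{orthogonal} decomposition, taking closures commutes with this sum, so $E_{+,u} \cap \Hh_k = \overline{\Ran(d_{u,k-1})}$. The cochain property $d_u^2 = 0$ gives $E_{+,u} \subseteq \ker(d_u)$, and the orthogonality statement of Proposition~\ref{Prop:Du}(1) combined with the standard relation $\ker(\overline{d_u}) = \overline{\Ran(d_u^*)}^{\perp} = E_{-,u}^{\perp}$ for a closed densely defined operator gives the reverse inclusion $\ker(d_u) = E_{0,u} \oplus E_{+,u}$. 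Intersecting this with $\Hh_k$ yields
\begin{gather*}
\ker(d_{u,k}) = \ker(\Delta_k) \oplus \overline{\Ran(d_{u,k-1})},
\end{gather*}
and passing to the quotient produces the claimed isomorphism.

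The step I expect to be the main subtlety is the clean separation of $\ker(d_u)$ along the three summands of the Hodge decomposition, since one must invoke the closedness of $d_u$ (from the previous Proposition), the Pythagorean identity~\eqref{Eqn:Pyth}, and the identification of the $E_{\pm,u}$ summands with closures of ranges of \emph{unbounded} operators. Once these are handled, the proof reduces to the standard Hodge-theoretic bookkeeping, and the restriction to the fixed degree $k$ follows because all three pieces $E_{\pm,u}$ and $E_{0,u}$ are preserved, or cleanly intersected, by the orthogonal projection $\Pi_k$ onto $\Hh_k$.
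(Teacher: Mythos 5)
Your argument is correct and follows essentially the same route as the paper: combine the Hodge decomposition of Theorem~\ref{Thm:HodgeDecomp} with the degree projections to get $\Hh_k = \ker(\Delta_k)\oplus\overline{\Ran(d_{u,k-1})}\oplus\overline{\Ran(d_{u,k}^*)}$, deduce $\ker(d_{u,k})=\ker(\Delta_k)\oplus\overline{\Ran(d_{u,k-1})}$ from the adjoint/orthogonality relation, and pass to the quotient. You simply spell out in more detail the graded identifications ($E_{0,u}=\bigoplus_k\ker(\Delta_k)$, $E_{+,u}\cap\Hh_k=\overline{\Ran(d_{u,k-1})}$) that the paper treats implicitly.
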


\begin{Remark}
\label{remarkfinitedimensionalityofcoh}
This Corollary implies in particular that these cohomology groups are f\/inite-dimensional, since $\ker(\Delta_k) = \ker(D_u)$ and $D_u$ has compact resolvent by Corollary~\ref{Cor:FiniteSumma}.
\end{Remark}

\begin{proof}
The cohomology group $H^k\Complex[u]$ is def\/ined as $\ker(d_{u, k})/\overline{\Ran(d_{u, k-1})}$. The Hodge decomposition Theorem \ref{Thm:HodgeDecomp} can be combined with the projections $\Pi_\pm$ and $\Pi_k$ on $\Hh_k$ to prove that $\Hh_k = \ker(\Delta_k) \oplus \overline{\Ran(d_{u,k-1})} \oplus \overline{\Ran(d_{u, k}^*)}$. We know that $\ker(d_{u, k}) = \Ran(d_{u,k}^*)^\perp$. Therefore $\ker(d_{u,k}) = \overline{\Ran(d_{u,k-1})} \oplus \ker(\Delta_k)$ from which it follows immediately that $H^k\Complex[u] = \ker(d_{u,k})/\overline{\Ran(d_{u,k-1})} \simeq \ker(\Delta_k)$.
\end{proof}

\begin{Proposition}
\label{Prop:Stability}
The cohomology groups $H^k\Complex[u]$ are abstractly isomorphic to the nonperturbed $(h = 0)$ cohomology groups $H^k\Complex$.
\end{Proposition}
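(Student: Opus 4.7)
The plan is to prove the proposition by exhibiting an explicit chain isomorphism induced by the operator $K_u$ introduced in~\eqref{Eqn:DefKu}. Observe first that $K_u$ acts on each $\Hh_k$ as right multiplication by the bounded invertible element $e^{(n/2-k)hu} \in \Aa^1$; consequently $K_u \colon \Hh \to \Hh$ is bounded with bounded inverse $K_{-u}$, preserves the degree decomposition, and (as already noted before Proposition~\ref{Prop:Du}) maps the core $\Core = \Aa^1 \otimes \bigwedge^\bullet \LieG^*$ to itself. By the very definition $d_u = K_u d K_{-u}$ on $\Core$, the intertwining relation $K_u \circ d = d_u \circ K_u$ holds on $\Core$.

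The first step is to promote this intertwining to the closures $\overline{d_k}$ and $\overline{d_{u,k}}$. Given $x \in \dom(\overline{d_k})$, choose a sequence $x_n \in \Core$ with $x_n \to x$ and $d\, x_n \to \overline{d_k}(x)$. Boundedness of $K_u$ gives $K_u x_n \to K_u x$, while $d_u(K_u x_n) = K_u(d\, x_n) \to K_u \overline{d_k}(x)$; since $K_u x_n \in \Core$ and $\overline{d_{u,k}}$ is closed, this forces $K_u x \in \dom(\overline{d_{u,k}})$ with $\overline{d_{u,k}}(K_u x) = K_u \overline{d_k}(x)$. Replacing $u$ by $-u$ (and using $K_{-u} K_u = \id$) supplies the reverse inclusion, so $K_u$ restricts to a linear bijection of domains intertwining the two closed differentials in each degree.

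The second step is to deduce the isomorphism on cohomology. The intertwining together with invertibility of $K_u$ yields $K_u(\ker \overline{d_k}) = \ker \overline{d_{u,k}}$. For the ranges, the same argument shows $K_u(\Ran(\overline{d_{k-1}})) = \Ran(\overline{d_{u,k-1}})$; since $K_u$ is a homeomorphism of $\Hh_k$, it commutes with taking closure, giving $K_u(\overline{\Ran(\overline{d_{k-1}})}) = \overline{\Ran(\overline{d_{u,k-1}})}$. Therefore $K_u$ descends to a well-defined linear isomorphism $H^k\Complex \xrightarrow{\sim} H^k\Complex[u]$ of the reduced cohomology groups, which is the claimed abstract isomorphism.

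The main obstacle I anticipate is purely one of bookkeeping around domains and closures of unbounded operators, specifically ensuring that the intertwining formula valid \emph{a priori} only on the core $\Core$ really does survive passage to $\dom(\overline{d})$ and interacts well with the closure of the range. Once this is established using the boundedness and bijectivity of $K_u$ and $K_{-u}$, the remainder of the argument reduces to elementary linear algebra on quotients of Hilbert spaces, and no analytic input beyond that already supplied by Proposition~\ref{Prop:Du} is needed.
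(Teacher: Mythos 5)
Your proposal is correct and follows essentially the same route as the paper: the authors also use the bounded invertible operator $K_u$ to identify $\ker(d_u)$ with $K_u\ker(d)$ and $E_{+,u}=\overline{\Ran(d_u)}$ with $K_uE_{+,0}$, concluding that the quotients are abstractly isomorphic. Your additional bookkeeping on cores, closures, and domains simply makes explicit the intertwining step the paper states as ``easy to check.''
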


\begin{proof}
It is easy to check that $\ker(d_u) = K_u \ker(d)$ and $E_{+, u} = K_u E_{+,0}$. Thus, as abstract vector space, $\ker(d_u)/E_{+,u} = K_u \ker(d_0)/ K_u E_{+,0}$ is f\/inite-dimensional, with the same dimension as $\ker(d_0)/E_{+,0}$.
\end{proof}

\begin{Remark}
The dimensions of $\ker(d_u)/E_{+,u}$ and $\ker(d_0)/E_{+,0}$ are the same, but there are not ``{concretely} isomorphic'' for the scalar product we consider. The \emph{concrete realisation} of $\ker(d_u)/E_{+, u}$ is $\{ \omegat \in \ker(d_u)\colon \forall\, \omegat' \in E_{+,u}, \langle \omegat, \omegat' \rangle = 0 \}$. However, $K_u$ does not preserve scalar products and therefore, $\ker(d_u)/E_{+,u}$ is not realised concretely by $K_u E_{0,0}$. In other words, $K_u E_{0,0}$ is \emph{not} the space of harmonic forms for $D_u$.
\end{Remark}

\section[Conformally twisted spectral triples for $C^*$-dynamical systems]{Conformally twisted spectral triples\\ for $\boldsymbol{C^*}$-dynamical systems}
\label{Sec:ConfTwistedTrSp}

In the following theorem, we use the selfadjoint operator $D_u$ to construct spectral triples for the natural actions of the algebra $A$ (with its left action on $\Hh$) and the algebra $A^\text{op}$ (acting on the right of $\Hh$).
\begin{Theorem}
\label{Thm:Main}
Let $G$ be a \emph{compact} Lie group of dimension $n$ acting ergodically on a~unital $C^*$-algebra $A$, then using the unique $G$-invariant trace $\varphi_0$ of Theorem~{\rm \ref{Thm:ErgodAct}}, we write $\Hh_0 := \GNS(A, \varphi_0)$.

For any fixed $h \in \Aa^1$ and any $u \in [0,1]$, the data $(A, \Hh_0 \otimes \bigwedge^\bullet \LieG^*, D_u)$ with grading $\gamma$ defines an even $n^+$-summable spectral triple, where
\begin{itemize}\itemsep=0pt
\item
the representation $\pi$ of $A$ on $\Hh = \Hh_0 \otimes \bigwedge^\bullet \LieG^*$ is given by restriction of the left multiplication~\eqref{Eqn:ProdOmega};
\item
the unbounded operator $D_u$ is the unique selfadjoint extension of
\begin{gather*}
D_{u} = K_{u} d K_{-u} + K_{-u} d^* K_{u},
\end{gather*}
defined on the core $\Core = \Aa^1 \otimes \bigwedge^\bullet \LieG^*$, the operator $K_u$ being defined by~\eqref{Eqn:DefKu};
\item
the grading operator $\gamma$ is defined on degree $k$ forms by
\begin{gather*}
\gamma( a \otimes v_1 \wedge \cdots \wedge v_k) = (-1)^k ( a \otimes v_1 \wedge \cdots \wedge v_k).
\end{gather*}
\end{itemize}
For any fixed $h \in \Aa^1$ and any $u \in [0,1]$, the data $(A^\text{op}, \Hh_0 \otimes \bigwedge^\bullet \LieG^*, D_u)$ with grading $\gamma$ defines an even $n^+$-summable \emph{twisted} spectral triple, with the automorphism $\beta$ on $A$ given by $\beta(a) = e^{h u} a e^{-h u}$ -- we use this $\beta$ to define an automorphism on $A^\text{op}$.
\end{Theorem}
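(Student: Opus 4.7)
The plan is to verify the axioms of Definition~\ref{Def:TrSp} (and of the twisted variant recalled at the end of Section~\ref{Sec:Reminders}), separating the common analytic facts, the left-action case, and the right-action case. The common ingredients are immediate: selfadjointness of $D_u$ on the common core $\Core = \Aa^1 \otimes \bigwedge^\bullet \LieG^*$ is exactly Proposition~\ref{Prop:Du}(2), the $n^+$-summability is Corollary~\ref{Cor:FiniteSumma}, and together with Remark~\ref{Rk:SymmIdeals} this yields condition~(i) of the definition. The grading $\gamma$ anti-commutes with $D_u$ because $d$ and $d^*$ shift the form-degree by $\pm 1$ while $K_{\pm u}$ preserves each $\Hh_k$, and $\gamma$ commutes with any degree-preserving operator, which covers both representations below.

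For the ordinary spectral triple, the representation $\pi$ by left multiplication is bounded on the GNS Hilbert space since $\varphi_0$ is a trace by Theorem~\ref{Thm:ErgodAct}, and $\pi(a)\Core \subseteq \Core$ whenever $a \in \Aa^\infty$ since $\Aa^1$ is a Banach algebra. The key point is that left multiplication commutes with every right multiplication, in particular with $K_{\pm u}$, so
\begin{gather*}
[D_u, \pi(a)] = K_u [d, \pi(a)] K_{-u} + K_{-u}[d^*, \pi(a)] K_u.
\end{gather*}
The explicit expressions in Lemma~\ref{Lem:Formed} show that the structure-constants pieces of $d$ and $d^*$ commute with $\pi(a)$ and drop out, leaving $[d, \pi(a)] = \sum_j \pi(\partial_j(a)) \otimes T_j$ and the analogous $T_j^*$-expression for $d^*$; both are bounded for $a \in \Aa^1 \supseteq \Aa^\infty$, hence so is $[D_u, \pi(a)]$.

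For the twisted spectral triple on $A^{\text{op}}$, let $\RightMult_a$ denote right multiplication by $a$ on the $\Hh_0$-factor; traciality of $\varphi_0$ makes it a bounded $*$-representation of $A^{\text{op}}$, it preserves each $\Hh_k$, it sends $\Core$ into itself for $a \in \Aa^\infty$, and it commutes with $\gamma$. The central step is to show that $D_u \RightMult_a - \RightMult_{\beta(a)} D_u$ extends boundedly for every $a \in \Aa^\infty$. On a homogeneous form $b \otimes v_1 \wedge \cdots \wedge v_k$, expand $d_u \RightMult_a$ and $\RightMult_{\beta(a)} d_u$ using Lemma~\ref{Lem:Formed} and the Leibniz rule. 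Writing $\lambda_k = n/2 - k$, the structure-constants contribution of $d_u$ collapses through the identity $e^{-\lambda_k hu}e^{(\lambda_k-1)hu} = e^{-hu}$ to a $k$-independent coefficient that is annihilated exactly by the choice $\beta(a) = e^{hu}ae^{-hu}$, so these pieces cancel identically; the remaining $\partial_j$-pieces, after Leibniz-expansion, reduce to right multiplications by elements of the form $\partial_j(a)e^{-hu}$, $a\,\partial_j(e^{-\lambda_k hu})\,e^{(\lambda_k-1)hu}$, and $\partial_j(e^{-\lambda_k hu})\,e^{\lambda_k hu}\,a\,e^{-hu}$, each of which lies in $A$ with norm bounded uniformly in $k$ by Lemma~\ref{Lem:CVExp} and $a \in \Aa^1$. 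The $K_{-u} d^* K_u$ contribution is handled identically, substituting the adjoints $T_j^*$, $(B^{i_k}_{\alpha\beta})^*$, $\overline{c^{i_k}_{\alpha\beta}}$.

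The main obstacle is precisely this twisted-commutator calculation: $K_u$ acts by a $k$-dependent right multiplication on each $\Hh_k$, so the twist forced by conjugation is a priori degree-dependent, and the content of the statement is that, after combining $K_u d K_{-u}$ with $K_{-u} d^* K_u$, the residual defect collapses to the single $k$-independent automorphism $\beta(a) = e^{hu}ae^{-hu}$ of $A$. Once this bookkeeping is carried out, everything else follows from Proposition~\ref{Prop:Du} and Corollary~\ref{Cor:FiniteSumma}.
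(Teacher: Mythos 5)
Your argument follows essentially the same route as the paper's proof: the same splitting of $D_u$ into the bounded structure-constant part and the two conjugated derivation parts $K_u(\sum_j \partial_j \otimes T_j)K_{-u}$, $K_{-u}(\sum_j \partial_j \otimes T_j^*)K_u$, the same Leibniz expansions, the same key identity $e^{-(n/2-k)hu}e^{(n/2-(k+1))hu}=e^{-hu}=e^{(n/2-k)hu}e^{-(n/2-(k-1))hu}$ forcing the single $k$-independent twist $\beta(a)=e^{hu}ae^{-hu}$, and the same appeals to Proposition~\ref{Prop:Du}, Corollary~\ref{Cor:FiniteSumma} and Lemma~\ref{Lem:CVExp}. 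Your residual right-multiplication terms $\partial_j(a)e^{-hu}$, $a\,\partial_j(e^{-\lambda_k hu})e^{(\lambda_k-1)hu}$, $\partial_j(e^{-\lambda_k hu})e^{\lambda_k hu}a e^{-hu}$ are exactly the ones appearing in the paper, and your (finer than needed) observation that the twisted commutator of the structure-constant part vanishes identically is correct.

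There is one genuine omission: Definition~\ref{Def:TrSp} and its twisted variant require $\pi(a)\bigl(\dom(D_u)\bigr)\subseteq \dom(D_u)$ for the \emph{full} domain of selfadjointness, whereas you only verify invariance of the core $\Core=\Aa^1\otimes\bigwedge^\bullet\LieG^*$. The paper closes this step by citing \cite[Proposition~A.1]{TrSpPiCr-Paterson}; you never pass from the core to the domain. The gap is fillable with the standard closure argument (for $x\in\dom(D_u)$ pick $x_m\in\Core$ with $x_m\to x$ and $D_u x_m\to D_u x$; then $\pi(a)x_m\to\pi(a)x$ and $D_u\pi(a)x_m=\pi(a)D_u x_m+[D_u,\pi(a)]x_m$ converges because the commutator is bounded on $\Core$, so $\pi(a)x\in\dom(D_u)$; the same works for the right multiplications by $a$ and $\beta(a)$ using the bounded twisted commutator), but as written the step is missing and should be stated, since it is precisely where boundedness of the (twisted) commutator is used a second time. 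A minor further inaccuracy: boundedness of the left GNS representation does not need traciality of $\varphi_0$ (it is automatic); traciality is what makes the \emph{right} action of $A^{\mathrm{op}}$ bounded, which you do invoke correctly there.
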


\begin{Remark}
The morphism $\beta$ def\/ined above preserves the multiplication of $A^\text{op}$. It also satisf\/ies the relation \emph{unitarity condition} (see \cite[equation~(3.4)]{TrSpTypeIII}) that is $\beta\big( (a^\text{op})^* \big) = (\beta^{-1}(a^\text{op}))^*$.
\end{Remark}

\begin{proof}
It is clear from the def\/inition of $\pi$ that $A$ is represented on $\Hh$ by bounded operators. The existence and uniqueness of the selfadjoint extension of $D_u$ is proved in Proposition~\ref{Prop:Du}, while the compact resolvent and f\/inite summability properties are shown in Corollary~\ref{Cor:FiniteSumma}.

We now prove that the commutator of $D_u$ with $a \in \Aa^1$ is bounded. To this end, we use the notations of Lemma~\ref{Lem:Formed} to decompose the operator $D_u$. We call
\begin{itemize}\itemsep=0pt
\item
Part (0) is the ``bounded part'' of $D_u$, that is the terms
\begin{gather*}
 - K_u \bigg( \frac{1}{2} \sum_{k,\alpha,\beta} c^{i_k}_{\alpha \beta} \otimes B^{i_k}_{\alpha,\beta} \bigg) K_{-u}
\qquad
\text{and}
\qquad
- K_{-u} \bigg( \frac{1}{2} \sum_{k,\alpha,\beta} \overline{c^{i_k}_{\alpha, \beta}} \otimes (B^{i_k}_{\alpha, \beta})^* \bigg) K_{u}.
\end{gather*}
\item
Part (I) consists of the terms
\begin{gather*}
K_u \bigg( \sum_{j} \partial _j \otimes T_j \bigg) K_{-u}.
\end{gather*}
\item
Part (II) consists of the terms
\begin{gather*}
K_{-u} \bigg( \sum_{j} \partial _j \otimes T_j^* \bigg) K_{u}.
\end{gather*}
\end{itemize}
Part (0) commutes with the left multiplication by $a \in \Aa^1$, and thus it does not contribute to the commutator. We therefore only need to estimate Parts~(I) and~(II) of $D_u( a' \omegat)$ for $\omegat = a \otimes v_1 \wedge \cdots \wedge v_k$, that is
\begin{gather*}
\sum_j \partial_j\big( a' a e^{-(n/2 - k)h u} \big) e^{(n/2 - (k+1))h u} \otimes T_j( v_1 \wedge \cdots \wedge v_k) \\
\qquad\quad{}
+ \sum_j \partial_j\big( a' a e^{(n/2 - k)h u} \big) e^{-(n/2 - (k-1))h u} \otimes T_j^*( v_1 \wedge \cdots \wedge v_k)\\
\qquad{}
= \sum_j \big(\partial_j( a') a e^{-(n/2 - k)h u} + a' \partial_j\big(a e^{-(n/2 - k)h u}\big)\big) e^{(n/2 - (k+1))h u}
\otimes T_j( v_1 \wedge \cdots \wedge v_k) \\
\qquad\quad{}
+ \sum_j \big(\partial_j( a') a e^{(n/2 - k)h u} + a' \partial_j\big(a e^{(n/2 - k)h u}\big)\big) e^{-(n/2 - (k-1))h u} \otimes T_j^*( v_1 \wedge \cdots \wedge v_k).
\end{gather*}
It follows from these considerations that
\begin{gather*}
[D_u, a'] \omegat = \sum_j \partial_j(a') a e^{-h u} \otimes (T_j + T_j^*)( v_1 \wedge \cdots \wedge v_k),
\end{gather*}
which is clearly a bounded function of $\omegat$ for any $a' \in \Aa^1$. Moreover, such $a' \in \Aa^1$ sends the core~$\Core$ of our selfadjoint operator $D_u$ to itself and following \cite[Proposition~A.1, p.~293]{TrSpPiCr-Paterson}, this suf\/f\/ices to ensure that $a' \in \Aa^1$ sends the domain of~$D_u$ to itself. The algebra $\Aa$ of Def\/inition~\ref{Def:TrSp} thus contains $\Aa^1$ and is dense in the $C^*$-algebra $A$. This completes the proof that $(A, \Hh, D_u)$ is a~$n^+$-summable spectral triple.

\looseness=-1
It remains to study its parity: it is clear from the def\/inition that $\gamma$ sends the core $\Core$ to itself and thus it leaves the full domain of the selfadjoint operator $D_u$ stable. Clearly, $\gamma$ distinguishes only between $\Hh_\text{even} := A \otimes \bigwedge^\text{even} \LieG^*$ and $\Hh_\text{odd} := A \otimes \bigwedge^\text{odd} \LieG^*$ and $\pi(a)$ leaves both spaces invariant, while $D_u$ is an odd operator. This proves that $(A, \Hh, D_u)$ with $\gamma$ is an even spectral triple.

The parity paragraph above applies \textit{verbatim} to the spectral triple constructed from the right action of $A^\text{op}$. The summability property is also conserved. It remains to investigate the bounded twisted commutators. Notice f\/irst that if $a', h \in \Aa^1$ then both right multiplications by~$a'$ and by~$\beta(a')$ leave the core $\Core$ of $D_u$ invariant and therefore the domain of $D_u$ is also stable under these right multiplication.

Using the decomposition of $D_u$ into Parts (0), (I) and (II), it appears that Part~(0) commutes with the right action of~$A^\text{op}$ and therefore does not contribute to the commutator. We treat Parts~(I) and~(II) separately. Keeping only Part (I) in the expression $D_u( \omegat \cdot a')$ for $\omegat = a \otimes v_1 \wedge \cdots \wedge v_k$, we get
\begin{gather*}
\sum_j \partial_j\big(a a' e^{-(n/2 - k)h u} \big) e^{(n/2 - (k+1))h u} \otimes T_j( v_1 \wedge \cdots \wedge v_k) \\
\qquad{}
= \sum_j \big(\partial_j( a) a' e^{-(n/2 - k)h u} + a \partial_j(a') e^{-(n/2 - k)h u}\big) e^{(n/2 - (k+1))h u}
 \otimes T_j( v_1 \wedge \ldots \wedge v_k) \\
 \qquad\quad{}
+ \sum_{j} a a' \partial _j\big(e^{-(n/2 - k)h u}\big) e^{(n/2 - (k+1))h u} \otimes T_j(v_1 \wedge \cdots \wedge v_k)\\
\qquad{}
= \sum_{j} \big(\partial _j(a) a' e^{- h u} + a \partial _j(a') e^{-hu} + a a' \partial _j\big(e^{-(n/2 - k)h u}\big) e^{(n/2 - (k+1))h u} \big)\\
\qquad\quad{}
 \otimes T_j(v_1 \wedge \cdots \wedge v_k).
\end{gather*}
We compare this expression to $D_u(\omegat) \beta(a')$, i.e.,
\begin{gather*}
\sum_j \partial_j\big(a e^{-(n/2 - k)h u} \big) e^{(n/2 - (k+1))h u} e^{h u} a' e^{-hu} \otimes T_j( v_1 \wedge \cdots \wedge v_k) \\
\qquad{}= \sum_j \partial_j(a) a' e^{-h u} \otimes T_j( v_1 \wedge \cdots \wedge v_k) \\
\qquad\quad{} + \sum_{j} a \partial _j\big(e^{-(n/2 - k)h u} \big) e^{(n/2 - (k+1))h u} e^{h u} a' e^{-hu} \otimes T_j( v_1 \wedge \cdots \wedge v_k).
\end{gather*}
In these two sums, the only terms that could lead to an unbounded contribution are those containing~$\partial _j(a)$, but these two terms cancel. At this point, we must perform the same computation on Part~(II) to make sure that the automorphism~$\beta$ is also suitable for this case. A~very similar computation proves that this it is indeed the case~-- the key property is that $e^{-(n/2 - k)h u} e^{(n/2 - (k+1))h u} = e^{- hu} = e^{(n/2 - k)h u} e^{-(n/2 - (k-1))h u}$~-- and thus the operator (def\/ined \textit{a~priori} only on~$\Core$)
\begin{gather*}
D_u \pi^\text{op}\big( (a')^\text{op} \big) - \pi^\text{op}( \beta(a')^\text{op}) D_u,
\end{gather*}
where $\pi^\text{op}\big( (a')^\text{op} \big) = \RightMult_{a'} \otimes \id_{\bigwedge^\bullet \LieG^*}$, extends to a bounded operator on~$\Hh$.
\end{proof}

Since the operator $D_u$ is odd with respect to the grading operator $\gamma$, we can write $D_u$ as combination of $D_u^+ \colon \Hh_\text{even} \to \Hh_\text{odd}$ and $D_u^- \colon \Hh_\text{odd} \to \Hh_\text{even}$. The odd Fredholm operator admits a (possibly) nontrivial index def\/ined as
\begin{gather}
\label{Eqn:DefOddIndex}
\ind_\text{odd}(D_u) = \dim \ker(D_u^+) - \dim \ker(D_u^-)
\end{gather}
(see, e.g., \cite[equation~(9.36), p.~397]{EltNCG}).

\section[Existence of a Chern--Gauss--Bonnet theorem for conformal perturbations of $C^*$-dynamical systems]{Existence of a Chern--Gauss--Bonnet theorem\\ for conformal perturbations of $\boldsymbol{C^*}$-dynamical systems}
\label{Sec:CGBTheorem}

In this section we show that the Hodge decomposition theorem proved in Section~\ref{Sec:HodgeOperator}
indicates the existence of an analog of the Chern--Gauss--Bonnet
theorem for the $C^*$-dynamical systems
studied in the present article. Let us explain the classical case
before stating the statement for our setting. Indeed, because of the natural
isomorphism between the space of harmonic dif\/ferential
forms and the de Rham cohomology groups, for a classical
closed manifold~$M$, the index of the operator
$d+d^*\colon \Omega^\text{even} M \to \Omega^\text{odd} M$ is
equal to the Euler characteristic of $M$. On the other
hand the McKean--Singer index theorem asserts that the index is
given by
\begin{gather*}
\ind\big(d+d^*\colon \Omega^\text{even} \to \Omega^\text{odd} \big)=\sum_{i=0}^{\dim M} (-1)^i\Tr \big(e^{-t \triangle_i}\big),
\end{gather*}
where $\triangle_i = d^* d + d d^*$ is the Laplacian on the
space of $i$-dif\/ferential forms on $M$, and $t$ is any positive number.
This formula, furthermore, contains local geometric information as~$t \to 0^+$,
since there is a small time asymptotic expansion of the form
\begin{gather*}
\Tr \big(e^{-t \triangle_i}\big)
\sim
t^{- \dim M/2} \sum_{j=0}^\infty a_{2j}(\triangle_i) t^j.
\end{gather*}
The coef\/f\/icients $a_{2j}(\triangle_i)$ are local geometric invariants,
which depend on the high frequency behaviour of the eigenvalues of the
Laplacian and are the integrals of some invariantly def\/ined local functions
$a_{2j}(x, \triangle_i)$ against the volume form of~$M$.
Independence of the index from~$t$ implies that the alternating sum of the
constant terms in the above asymptotic expansions for~$\triangle_i$ gives the
index. Hence, using the Hodge decomposition theorem,
\begin{gather*}
\chi(M)=\ind\big(d+d^*\colon \Omega^\text{even} \to \Omega^\text{odd} \big)
=
\int_M \sum_{i=0}^{\dim M} (-1)^ia_{\dim M}(x,\triangle_i) \, d \text{vol}_g.
\end{gather*}
In fact, the integrand in the latter coincides with the Pfaf\/f\/ian of the
curvature form, which is a~remarkable and dif\/f\/icult identif\/ication~\cite{HeatEquationABP}.

With notations and assumptions as in Section~\ref{Sec:HodgeOperator}, we obtain the following
result which indicates the existence of an analog of the Chern--Gauss--Bonnet theorem
in the setting of $C^*$-dynamical systems studied in this article.

\begin{Theorem}
The Euler characteristic $\chi$ of the complex $\Complex[u]$ is related to the odd index defined in~\eqref{Eqn:DefOddIndex}
\begin{gather*}
\chi
 =
\sum_{k=0}^n (-1)^k \dim H^k \Complex[u]
 =
\sum_{k=0}^n (-1)^k \ker(\Delta_k )
 =
\ind_\textnormal{odd} (D_u),
\end{gather*}
and is independent of the conformal factor $e^{-h}$.
\end{Theorem}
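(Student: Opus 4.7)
The plan is to assemble the three claimed equalities, together with the conformal invariance, from results already established earlier in the paper; no genuinely new analytic input is required.

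For the first equality I would invoke Corollary \ref{Cor:CohomGp}, which supplies the degree-by-degree isomorphism $H^k\Complex[u]\simeq\ker(\Delta_k)$; by Remark \ref{remarkfinitedimensionalityofcoh} each of these spaces is finite-dimensional, so passing to dimensions and taking the alternating sum yields $\chi=\sum_{k=0}^n(-1)^k\dim\ker(\Delta_k)$ immediately.

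For the identification with $\ind_\textnormal{odd}(D_u)$, the key point is that $D_u$ is selfadjoint by Proposition \ref{Prop:Du}, so $\ker(D_u)=\ker(D_u^2)$. Writing $D_u=d_u+d_u^*$ and using $d_u^2=0$ and $(d_u^*)^2=0$ on the common core, one sees that $D_u^2=d_ud_u^*+d_u^*d_u$ preserves the degree decomposition and restricts on each $\Hh_k$ to the Laplacian $\Delta_k$. Consequently $\ker(D_u)=\bigoplus_{k=0}^n\ker(\Delta_k)$, and splitting this according to the grading $\gamma$ gives $\ker(D_u^+)=\bigoplus_{k\text{ even}}\ker(\Delta_k)$ and $\ker(D_u^-)=\bigoplus_{k\text{ odd}}\ker(\Delta_k)$. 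Subtracting the two dimensions recovers exactly the alternating sum, which is the second equality.

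Finally, for conformal invariance I would appeal to Proposition \ref{Prop:Stability}: the invertible (not unitary) operator $K_u$ carries $\ker(d)$ to $\ker(d_u)$ and $E_{+,0}$ to $E_{+,u}$, producing an abstract vector space isomorphism $H^k\Complex[u]\simeq H^k\Complex$ for each $k$. Since the right-hand side is the cohomology of the unperturbed Chevalley--Eilenberg complex, which does not involve $h$ at all, the alternating sum of dimensions — and therefore $\chi$ as well as $\ind_\textnormal{odd}(D_u)$ — is independent of the conformal factor $e^{-h}$. The only step requiring verification is the identity $D_u^2|_{\Hh_k}=\Delta_k$; once this degree-wise decomposition of $D_u^2$ is in hand the remainder is formal bookkeeping with the finite-dimensional kernels isolated in Corollary \ref{Cor:CohomGp} and Proposition \ref{Prop:Stability}, and there is no single hard analytic obstacle to overcome.
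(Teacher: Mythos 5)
Your proposal is correct and follows essentially the same route as the paper: both rest on Corollary~\ref{Cor:CohomGp} together with the finite-dimensionality of the kernels, identify $\ker(D_u^\pm)$ with the even/odd sums of $\ker(\Delta_k)$ via selfadjointness of $D_u$ (the paper does this through $0=\langle\omegat,\Delta_k\omegat\rangle=\|D_u\omegat\|^2$, you through $\ker(D_u)=\ker(D_u^2)$ and the degree decomposition of $D_u^2$, which amounts to the same thing since $\Delta_k$ is by definition the restriction of $D_u^2$ to $\Hh_k$), and derive conformal invariance from Proposition~\ref{Prop:Stability}.
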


\begin{proof}
The f\/irst equality is actually the def\/inition of the Euler characteristic $\chi$. The second equality is an immediate consequence of Corollary~\ref{Cor:CohomGp}. The third equality and the last statement can be justif\/ied by using Remark~\ref{remarkfinitedimensionalityofcoh}
and Proposition~\ref{Prop:Stability}. That is, $\omegat \in \ker(\Delta_k)$ means in particular that~$\omegat$ is in the domain of~$\Delta_k$, which is included in the domain of~$D_u$ (by def\/inition). We then have
\begin{gather*}
0 = \langle \omegat, \Delta_k \omegat \rangle = \langle D_u \omegat, D_u \omegat \rangle,
\end{gather*}
which proves that $\omegat \in \ker(D_u)$. For a $k$-form $\omegat$, the converse is obvious. It follows that $\ker(D_u^+) = \bigoplus_{k \geqslant 0} \ker(\Delta_{2 k})$ and $\ker(D_u^-) = \bigoplus_{k \geqslant 0} \ker(\Delta_{2 k+1})$, which yields
\begin{gather*}
\ind_\text{odd}(D_u) = \dim \ker(D_u^+) - \dim \ker(D_u^-)
= \bigoplus_{k \geqslant 0} \dim \ker(\Delta_{2 k}) - \bigoplus_{k \geqslant 0} \ker(\Delta_{2 k+1}) = \chi.
\end{gather*}
The dimension of these groups are independent of the conformal factor $e^{-h}$ as a consequence of Proposition~\ref{Prop:Stability}.
\end{proof}

\begin{Remark}
An alternative proof of the index property using only bounded operators can be obtained using Sobolev spaces. For a clear account of these spaces and their analytic properties in our setting, we refer the reader to the paper~\cite{TrSpLieGpWahl}.
Also, in order to have a~complete analog of the Chern--Gauss--Bonnet theorem, one needs to f\/ind a local geometric formula
for the index, which is proved above to be a conformal invariant.
\end{Remark}

The heat kernels of Laplacians of conformally perturbed metrics on certain noncommutative spaces such as the noncommutative $n$-tori $\mathbb{T}_{\Theta}^n$ admit asymptotic expansions of the form
\begin{gather}
\label{Eqn:AsymptExp0}
\Tr( e^{-t \Delta_k}) \sim \sum_{j=0}^\infty a_j( \Delta_k) t^{(j-n)/2}, \qquad t \to 0^+.
\end{gather}
In fact, for noncommutative tori, each Laplacian $\Delta_k$ is an elliptic selfadjoint dif\/ferential operator of order 2, and
asymptotic expansions of this form can be derived by using
the heat kernel method explained in \cite{Gilkey} while employing Connes' pseudodif\/ferential calculus~\cite{CAlgGeoDiff}.
This method was indeed used in
\cite{ModCurvCM,GaussBonnet-ConnesTretkoff, GaussBonnet-FK, ScalarCurvNCtorusFK, ScalarCurv4NCTFK}, for
calculating and studying the term in the expansion that is related to the scalar curvature of noncommutative two and four tori.
Going through this process for noncommutative tori $\mathbb{T}_{\Theta}^n$, one can see that the odd coef\/f\/icients
in the latter asymptotic expansion will vanish, since in their explicit formula in terms of the pseudodif\/ferential symbol
of $\Delta_k$, there is an integration over the Euclidean space $\mathbb{R}^n$ of an odd function involved (see \cite[p.~54 and Theorem~1.7.6, p.~58]{Gilkey}). Thus in the case of the noncommutative torus $\mathbb{T}_{\Theta}^n$ we can write~\eqref{Eqn:AsymptExp0} as
\begin{gather}
\label{Eqn:AsymptExp}
\Tr\big( e^{-t \Delta_k}\big) \sim t^{-n/2} \sum_{j=0}^\infty a_{2j} ( \Delta_k) t^j, \qquad t \to 0^+.
\end{gather}
Now, using the McKean--Singer index formula \cite[Lemma~1.6.5, p.~47]{Gilkey} and our analog of Hodge decomposition theorem, for any $t > 0$ we have
\begin{gather}
\label{Eqn:Chi}
\chi = \sum_{k=0}^n (-1)^k \Tr\big( e^{-t \Delta_k}\big).
\end{gather}
Thus from equations~\eqref{Eqn:AsymptExp} and~\eqref{Eqn:Chi}, and using the independence
of the Euler characteristic from $t$ which implies that only the constant term from~\eqref{Eqn:AsymptExp} contributes
to the calculation of the Euler characteristic, we can write
\begin{gather*}
\chi = \sum_{k=0}^n (-1)^k a_{n}(\Delta_k) = \sum_{k=0}^n (-1)^k \varphi_0(\Rr_k),
\end{gather*}
where the local geometric invariants $\Rr_k$ are derived from the pseudodif\/ferential symbols of the Laplacian $\Delta_k$, by a heat kernel method. This method was used for example in \cite{ModCurvCM, ScalarCurvNCtorusFK, ScalarCurv4NCTFK} for computation of scalar curvature for noncommutative two
and four tori. The alternating sum of the $\Rr_k$ gives a noncommutative analog of the local expression for the Euler class.

\section{Summary and conclusions}
\label{Sec:Conclusions}

The Chern--Gauss--Bonnet theorem is an important generalization
of the Gauss--Bonnet theorem for surfaces, which states that
the Euler characteristic of an even-dimensional Riemannian
manifold can be computed as the integral of a characteristic
class, namely the Pfaf\/f\/ian of the curvature form, which is
a local invariant of the geometry. In particular, it shows that
the integral of this geometric invariant is independent of the
metric and depends only on the topology of the manifold.
The results obtained in this paper show that the analog
of this theorem holds for a general ergodic $C^*$-dynamical
system, whose algebra and Lie group are not necessarily
commutative. To be more precise, the family of metrics considered
for a dynamical system is obtained by using an invertible positive
element of the $C^*$-algebra to conformally perturb
a~f\/ixed metric def\/ined via the unique invariant trace, and our
result is about the
invariance of a~quantity, which is a~natural analog of the
Euler characteristic, from the conformal factor.

This type of results were previously proved for the
noncommutative two torus $\mathbb{T}_\theta^2$. That is, the analog of the
Gauss--Bonnet theorem was proved in \cite{GaussBonnet-ConnesTretkoff} and extended
to general translation invariant complex structures on these very
important but particular $C^*$-algebras in \cite{GaussBonnet-FK},
where a conformal factor varies the metric. The
dif\/ferential geometry of $C^*$-dynamical systems were
developed and studied in \cite{CAlgGeoDiff}, where the noncommutative
two torus $\mathbb{T}_\theta^2$ played a crucial role. However the investigation
of the analog of the Gauss--Bonnet theorem for $\mathbb{T}_\theta^2$, when the f\/lat metric is
conformally perturbed was pioneered in \cite{PreprintCC},
where after heavy calculations, some noncommutative
features seemingly indicated that the theorem does
not hold. However, stu\-dying the spectral
action in the presence of a dilaton \cite{SpActScaleChC}, the development of
the theory of twisted spectral triples \cite{TrSpTypeIII}, and further studies of
examples of complex structures on noncommutative manifolds \cite{LandietalComplex},
led to convincing observations that the Gauss--Bonnet theorem
holds for the noncommutative two torus. Then, by further analysis
of the expressions and functions of a~modular automorphism
obtained in~\cite{PreprintCC}, Connes and Tretkof\/f proved the desired
result in~\cite{GaussBonnet-ConnesTretkoff} for the simplest translation invariant conformal
structure, and the generalization of their result was established
in~\cite{GaussBonnet-FK} (where the use of a computer for the heavy computations
was inevitable).

It is remarkable that, a non-computational proof
of the Gauss--Bonnet theorem for the noncommutative two torus
is given in~\cite{ModCurvCM}, which is based on the work~\cite{BransonetalConInd}, where
the conformal index of a Riemannian manifold is def\/ined using properties of conformally covariant operators and the variational
properties of their spectral zeta functions. Therefore, since computations
are enormously more involved in dimensions higher than two, it is
of great importance to use spectral methods to show the existence
of the analog of the Chern--Gauss--Bonnet theorem, which is presented
in this article, not only for nocommutative tori, but for general $C^*$-dynamical
systems. We have also paid special attention to the spectral properties
of the analog of the Hodge--de~Rham operator of the perturbed metric: we have
proved its selfadjointness and shown that the spectral dimension is preserved.
We have then shown that this operator gives rise to a spectral triple with the unitary
left action of the algebra, and gives a twisted spectral triple with the unitary action of the
opposite algebra on the right, generalizing the construction in \cite{GaussBonnet-ConnesTretkoff} on the noncommutative
two torus and providing abstractly a large family of twisted spectral triples.

\subsection*{Acknowledgements}
The authors thank the Hausdorf\/f Research Institute
for Mathematics (HIM) for their hospitality and support during
the trimester program on Noncommutative Geometry
and its Applications in 2014, where the present work
was partially carried out. They also thank the anonymous referees
for their constructive feedback.
Parts of this article were obtained and written while the second author was working as a~postdoc at the University of Glasgow. He would like to thank C.~Voigt for enabling his stay in Scotland.
	
\pdfbookmark[1]{References}{ref}
\LastPageEnding

\end{document}